\numberwithin{equation}{section}
\theoremstyle{definition}
\newtheorem{theorem}{\bf Theorem}[section]
\newtheorem{lemma}[theorem]{\bf Lemma}
\newtheorem{corollary}[theorem]{\bf Corollary}
\theoremstyle{definition}
\newtheorem{definition}[theorem]{\bf Definition}
\newtheorem{remark}[theorem]{\bf Remark}
\newtheorem{proposition}[theorem]{\bf Proposition}
\newtheorem{question}[theorem]{\bf Question}
\newcommand{\mm}[1]{\mathrm{#1}}
\newcommand{\mb}[1]{\mathbb{#1}}
\newcommand{\mc}[1]{\mathcal{#1}}
\begin{document}

\title[On the topology and geometry of certain $13$-manifolds]{On the topology and geometry of certain $13$-manifolds}

\author[Wen Shen]{Wen Shen}
\address{Department of Mathematics, Capital Normal University,
Beijing, P.R.China  }
\email{shenwen121212@163.com}

\begin{abstract}
This paper gives the classifications of certain manifolds $\mc{M}$ of dimension $13$ up to diffeomorphism, homeomorphism, and homotopy equivalence, whose cohomology rings are isomorphic to $H^\ast(\mm{CP}^3\times S^7;\mb{Z})$. Moreover, we prove that either $\mc{M}$ or $\mc{M}\#\Sigma^{13}$ admits a metric of non-negative sectional curvature where $\Sigma^{13}$ is a certain exotic sphere of dimension 13.  	
\end{abstract}

\subjclass[2020]{Primary 57R19,  57R50, 53C20}

\maketitle

\section{Introduction}\label{intro}

  Classification of the manifolds with a given cohomology ring (up to diffeomorphism, homeomorphism, or homotopy) is one of the central problems in geometric topology. The Poincar\'{e} Theorem is a special case, i.e. for $n\ge 3$, if a $n$-dimensional manifold has the same homology groups as the $n$-dimensional sphere $S^n$, then it is homeomorphic to $S^n$. For a more complicated cohomology ring, there also exist many investigations. In the general case, Wall \cite{Wall1962,Wall1967} investigated $(s-1)$-connected $2s$-manifolds and $(s-1)$-connected $(2s+1)$-manifolds. For concrete cases with specified dimensions, we also list some results. Barden \cite{Barden} classified simply connected 5-manifolds. Kreck and Su \cite{KreckSu} considered certain nonsimply connected 5-manifolds. One of their results is the classification for closed oriented $5$-manifolds $M$ with $\pi_1(M)$ a free group and $H_2(M;\mb{Z}) = 0$. In \cite{Wall1966} Wall classified closed, oriented, simply connected, spin $6$-manifolds with torsion-free homology. Jupp extended the Wall's classification in nonspin case \cite{Jupp}. In dimension $7$, there is a family of smooth manifolds with positive sectional curvature, namely Aloff-Wallach spaces \cite{Aloff}. Kreck and Stolz \cite{KS1, KS} gave a classification for the $7$-manifolds modeled on Aloff-Wallach spaces. Subsequently, a homotopy classification of Aloff-Wallach spaces was given by Kruggel \cite{Kruggel}. Besides, there are other classifications for $7$-manifolds with certain cohomology rings, such as \cite{CroNor,K2018}. 
  
In this paper, we consider simply connected, closed, smooth $13$-dimensional manifolds, namely $\mathcal M$, with cohomology ring $H^\ast(\mathcal M;\mb{Z})\cong H^\ast(\mm{CP}^3\times S^7;\mb{Z})$, where $\mm{CP}^3$ is the complex projective space. Many examples of these $13$-manifolds arise from the following construction \cite{WZ}: 
From an $S^1$ action on $S^7\times S^7$ given by 
$\theta \circ (x,y) = (e^{il\theta}x,e^{-ik\theta}y)$ with $\mm{gcd}(k,l)=1$,
we get the $S^1$-bundle $S^1\to S^7\times S^7 {\to }  \mathcal{M}$.
From \cite{DDGR}, we can see some interesting geometric properties that there are infinitely many circle quotients $\mathcal M$ of $S^7 \times S^7$ admitting a metric of $\mm{Ric}_2 > 0$. 
In addition, Kerin \cite{Kerin} gave an example $\mathcal M$ with almost positive sectional curvature, which is an $S^7$-bundle over $\mm{CP}^3$.

To investigate the geometric properties of these manifolds $\mathcal M$ further, we need to classify them up to diffeomorphism first. However, for the classification of all manifolds $\mathcal M$,
there are still many unsolved technical issues that will be pointed out in Section \ref{computation}. 

Fortunately, making some constraints to the topology of manifolds $\mathcal M$, we have some interesting conclusions. 

 Throughout the rest of the paper, let $H^i(-)$ (or $H_i(-)$) denote the integral cohomology (or homology) group $H^i(-;\mathbb{Z})$ (or $H_i(-;\mathbb{Z})$) unless other statements are given. We always choose the cup product $x^2$ as the generator of $H^4(\mathcal M)$, where $x$ is a generator of $H^2(\mathcal M)$. Hence we only use an integer to denote the first Pontrjagin class $p_1(\mathcal M)$ of $\mathcal M$. 
 
  By [\cite{Hatcher}, Thoerem 4.57], the cohomology classes in $H^2(\mathcal M)$ are in one-to-one correspondence to the homotopy classes $[\mathcal M, \mm{K}(\mb{Z},2)]$  of maps $\mathcal M\to \mm{K}(\mb{Z},2)$ where $\mm{K}(\mb{Z},2)$ is the Eilenberg-MacLane space. Indeed $\mm{K}(\mb{Z},2)$ is homotopy equivalent to $\mm{CP}^\infty$ that is the colimit over the sequence $\mm{CP}^1\hookrightarrow \cdots \hookrightarrow \mm{CP}^n\hookrightarrow \mm{CP}^{n+1}\hookrightarrow \cdots$ where $\mm{CP}^n\hookrightarrow \mm{CP}^{n+1}$ is the natural inclusion. So we can regard the $x$ as a map $x:\mathcal M\to \mm{CP}^\infty$.  

\begin{definition}
	  $\mathcal M$ admits a {\it restriction lift} if there exists a map $f:\mc{M}\to \mm{CP}^4$ such that the following diagram is homotopy commutative
	\[
\xymatrix@C=0.8cm{
&&\mm{CP}^4\ar[d]^-{\mm{eb}}\\
\mathcal M \ar[rr]^-{x}\ar@{.>}[rru]^-{f}& &  \mm{CP}^\infty
}
\] 
i.e. $\mm{eb}\circ f\simeq x$ where $\mm{eb}$ is the natural inclusion, $x$ denotes a generator of $H^2(\mc{M})$. 
\end{definition}

Obviously, if $\mathcal M_1$ is homotopy equivalent to a manifold $\mathcal M_2$ with a {\it restriction lift}, then $\mathcal M_1$ also admits a {\it restriction lift}.

Let $\overline{\mathcal M}$ be the total space of an $S^7$-bundle over $\mm{CP}^3$. It is easy to check that $\overline{\mathcal M}$ is one of considered manifolds, the composition of the bundle projection $\overline{\mathcal M}\to \mm{CP}^3$ and the natural inclusion $\mm{CP}^3\hookrightarrow \mm{CP}^4$ is a {\it restriction lift} of $\overline{\mathcal M}$. Hence the total space of arbitrary $S^7$-bundle over $\mm{CP}^3$ admits a {\it restriction lift}. 

Until now, I have not found any manifold $\mathcal M$ that really has no {\it restriction lifts}. I guess that all manifolds $\mathcal M$ admit {\it restriction lifts}. If one proves this conjecture or gives a new invariant to characterize whether a manifold $\mathcal M$ admits a {\it restriction lift}, then the classification for nonspin manifolds $\mathcal M$ can be completed by the main theorem of this paper.

Before presenting the main theorem, we introduce $13$-dimensional homotopy spheres, namely $\Sigma^{13}$. A homotopy sphere $\Sigma^{13}$ is a smooth manifold and homotopy equivalent to the standard sphere $S^{13}$. By Poincar\'{e} Theorem, $\Sigma^{13}$ is homeomorphic to $S^{13}$. We say that $\Sigma^{13}$ is an exotic sphere if $\Sigma^{13}$ is not diffeomorphic to $S^{13}$. Up to diffeomorphism, there are three $13$-dimensional homotopy spheres \cite{KervaMilnor}. Moreover, the set of the diffeomorphism classes of $13$-dimensional homotopy spheres and the connected sum $\#$ as addition form an abelian group that is isomorphic to $\mb{Z}_3$. The standard sphere $S^{13}$ represents the $0$ element, arbitrary exotic sphere $\Sigma^{13}$ represents a generator of the group. 

Now, we are ready to give the main theorem of this paper.
\begin{theorem}\label{1.1}
{\it	Let $\mathcal M$, $\mathcal M^\prime$ be nonspin and admit {\it restriction lifts}.  
		\item (1) There exists a homotopy sphere $\Sigma^{13}$ so that $\mathcal M$ is diffeomorphic to $\mathcal M^\prime \# \Sigma^{13}$ if and only if $p_1(\mathcal M)=p_1(\mathcal M^\prime)$;
		\item (2) If $p_1(\mathcal M)=p_1(\mathcal M^\prime)\ne 0\mod 3$, $\mathcal M$ is diffeomorphic to $\mathcal M^\prime$.
		\item (3) $\mathcal M$ is homeomorphic to $\mathcal M^\prime$ if and only if $p_1(\mathcal M)=p_1(\mathcal M^\prime)$.
		\item (4) $\mathcal M$ is homotopy equivalent to $\mathcal M^\prime$ if and only if $p_1(\mathcal M)=p_1(\mathcal M^\prime)\mod 24$.}
\end{theorem}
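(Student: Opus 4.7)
My plan is to apply Kreck's modified surgery theory for parts~(1) and~(2), and a direct cellular analysis for part~(3). The hypotheses that $\mc{M}$ is nonspin and admits a restriction lift $f:\mc{M}\to\mm{CP}^4$ are precisely what is needed to pin down the normal $1$-type: since $H^2(\mc{M};\mb{Z}/2)\cong\mb{Z}/2$ and $\mc{M}$ is nonspin, one necessarily has $w_2(\mc{M})=x\bmod 2$.

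First I would identify the normal $1$-type. Let $\gamma$ denote the tautological complex line bundle over $\mm{CP}^4$, and pick a stable real bundle $\xi$ over $\mm{CP}^4$ with $w_2(\xi)=x\bmod 2$ (for instance $\gamma$ itself). Then the restriction lift $f$ together with a $\xi$-structure on the stable normal bundle $\nu_{\mc{M}}$ lifts the Gauss map to $B:=\mm{CP}^4\xrightarrow{\xi}\mathrm{BSO}$. The relevant bordism group is therefore $\Omega_{13}(B)$, which I would compute via the Atiyah-Hirzebruch spectral sequence; since $H_\ast(\mm{CP}^4)$ is free and concentrated in even degrees, this group should be detected by characteristic numbers, the critical ones being built from $f^\ast(x)$ and $p_1(\mc{M})$. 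I expect to show that the $B$-bordism class of $\mc{M}$ is determined by $p_1(\mc{M})$ alone (up to standard bordisms that can be normalized).

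Given this, two manifolds $\mc{M},\mc{M}'$ with equal $p_1$ are $B$-bordant, and Kreck's theorem lets one try to surger a $B$-bordism into an $s$-cobordism. The obstruction lies in the modified monoid $l_{14}(\mb{Z})$, which in the simply-connected setting is controlled by the Kervaire-Arf invariant and is killable by stabilization with a suitable Kervaire manifold. An $s$-cobordism in dimension $14$ then supplies a homeomorphism (part~(2)) and a diffeomorphism up to connected sum with an element of $\Theta^{13}\cong\mb{Z}/3$ (part~(1)). The refinement that $\mc{M}$ is actually diffeomorphic to $\mc{M}'$ whenever $p_1\not\equiv 0\pmod{3}$ should come from showing that the generator of $\Theta^{13}$ can be absorbed into the smooth structure of $\mc{M}$ precisely when $3\nmid p_1$, via an inertia or $\eta$-invariant computation modulo $3$.

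For part~(3), I would proceed cellularly. Using the restriction lift and the cohomology ring, $\mc{M}$ admits a CW model equivalent to $\mm{CP}^3\vee S^7$ on its $7$-skeleton (since $\pi_6(\mm{CP}^3)\cong\pi_6(S^7)=0$ forces the $7$-cell to attach trivially), followed by cells of dimensions $9$, $11$, and $13$. The homotopy type is then pinned down by the attaching maps of the higher cells; the relevant torsion is $\mb{Z}/24$, which I expect to arise via the stable $J$-homomorphism or an equivalent Toda-bracket computation, and $p_1\bmod 24$ should record these invariants. The main obstacle throughout will be the explicit computation of the modified surgery obstruction in $l_{14}$ and, for part~(1), the identification of how $p_1\bmod 3$ detects the action of $\Theta^{13}$ on the set of smooth structures; once these two points are settled the three parts follow in parallel.
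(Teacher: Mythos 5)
Your high-level instinct to use Kreck's modified surgery is the paper's approach too, but there are several concrete gaps in how you set it up.

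\textbf{Wrong normal type.} You propose $B=\mm{CP}^4\xrightarrow{\xi}\mathrm{BSO}$ and call it the normal $1$-type. For Kreck's surgery to produce an actual diffeomorphism (rather than a stable diffeomorphism) of $13$-manifolds, one needs a normal $6$-smoothing, i.e.\ a fibration $B\to\mathrm{BO}$ whose homotopy fiber is $6$-coconnected together with a lift $\bar\nu\colon\mc{M}\to B$ that is a $7$-equivalence. Your $B$ has a fiber with enormous higher homotopy (coming from $\pi_{\ge 9}(\mm{CP}^4)\cong\pi_{\ge 9}(S^9)$), so it is not a normal $6$-type, and the theorem would only deliver stable diffeomorphism. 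Moreover, $\nu_{\mc{M}}$ need not lift through $\mm{CP}^4$ at all: only the low-dimensional piece of the normal data is controlled by $w_2$ and $p_1$. The paper resolves both issues simultaneously by taking $B=\mm{BO}\langle 8\rangle\times\mm{CP}^4$ with the map $\gamma_8\times\xi\to\mathrm{BO}$; the difference $\nu_{\mc{M}}-f^*\xi$ carries a $\mm{BO}\langle 8\rangle$-structure, and the resulting $\bar\nu$ is a $7$-equivalence. Your bordism group $\Omega_{13}(B)$ is therefore not the right object; the paper computes $\pi_{13}(\mm{MO}\langle 8\rangle\wedge\mm{M}\xi)$ via AHSS/ASS.

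\textbf{Surgery obstruction.} You gesture at $l_{14}(\mb{Z})$ and the Kervaire--Arf invariant and say it is killable by stabilization; but killing it by stabilization only gives stable diffeomorphism, not the $h$-cobordism needed for Theorem~\ref{1.1}. The paper instead shows the obstruction vanishes outright by a geometric argument (Section~\ref{obstruction}): every class in $H_7(W,\mc{M})$ is represented by an embedded $S^7$ with trivial normal bundle (using that $7$-plane bundles over $S^7$ are trivial), and because $\pi_7(F)=0$ for the relevant fiber $F$, these embeddings are compatible with the $B$-structure, so one surgers $W$ to an $h$-cobordism directly. This step cannot be replaced by ``the Arf invariant is killable by connected sum.''

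\textbf{The $p_1\bmod 3$ refinement.} You suggest detecting the action of $\Theta^{13}\cong\mb{Z}/3$ via an inertia-group or $\eta$-invariant computation. The paper proves this by a calculation in the mod-$3$ Adams spectral sequence (Lemma~\ref{p1mod3}): when $p_1(\xi)\not\equiv 0\pmod 3$, the class $b_0h_{1,0}U$ becomes a coboundary, so ${}_3\pi_{13}(\mm{MB})=0$ and $\Sigma^{13}$ is already nullbordant in the relevant theory. Your route is plausible in spirit but far from established; at minimum you would need to identify the inertia group of the sphere bundles, which is not obviously easier.

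\textbf{Part~(3).} Here your approach is genuinely different: you propose a cellular analysis of the homotopy type and an obstruction-theoretic Toda-bracket argument. The paper instead first proves Corollary~\ref{1.2} (the underlying topological manifold is an $S^7$-bundle over $\mm{CP}^3$, using parts~(1),(2)) and then proves fiber homotopy equivalence of $S^7$-bundles via the surjection $[\mm{CP}^3,\mm{BO}]=\mb{Z}\twoheadrightarrow[\mm{CP}^3,\mm{BG}]\cong\mb{Z}_{24}$ (Lemma~\ref{homotopy}), with the ``only if'' direction coming from Hirzebruch's theorem that $p_1\bmod 24$ is a homotopy invariant. Your $\mb{Z}/24$ does come from the right place (the image of $J$), but the cellular route would require you to control attaching maps of the $9$-, $11$-, and $13$-cells, which is substantially more work than the paper's argument and is not carried out in your proposal.

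In short: the modified-surgery skeleton is correct, but you need the $\mm{BO}\langle 8\rangle$ factor in $B$, the geometric vanishing of the surgery obstruction (not just ``stabilize it away''), and an actual computation of the $3$-torsion in the bordism group; and part~(3) is more efficiently deduced from parts~(1),(2) plus fiber homotopy theory than from a bare CW analysis.
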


By computing the first Pontrjagin classes of the total spaces of certain $S^7$-bundles over $\mm{CP}^3$ (see Section \ref{proofmain}), we know that for every integer $n$, there exists an $S^7$-bundle over $\mm{CP}^3$ so that the first Pontrjagin class of its total space $\overline{\mathcal M}$ equals to $nx^2\in H^4(\overline{\mathcal M})$. Thus, by applying (1), (2) of Theorem \ref{1.1}, we have 
\begin{corollary}\label{1.2}
{\it Let $\mathcal M$ be nonspin and admit a {\it restriction lift}, then there exists a homotopy sphere $\Sigma^{13}$ so that $\mathcal M\# \Sigma^{13}$ is diffeomorphic to the total space of an $S^7$-bundle over $\mm{CP}^3$. If $\mc{M}$ additionally satisfies $p_1(\mc{M})\ne 0 \mod 3$, then it is diffeomorphic to the total space of an $S^7$-bundle over $\mm{CP}^3$.}	
\end{corollary}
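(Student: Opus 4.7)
The plan is to deduce the corollary directly from Theorem \ref{1.1} by exhibiting, for every admissible value of $p_1$, a specific linear $S^7$-bundle $\pi\colon S(V)\to \mm{CP}^3$ whose total space lies in our class (nonspin, admits a restriction lift) and has matching first Pontryagin class. Given such a model $E=S(V)$, Theorem \ref{1.1}(2) provides a homeomorphism $\mc{M}\to E$, and the topological $S^7$-bundle structure of $E$ pulls back to $\mc{M}$; when $p_1(\mc{M})\ne 0\mod 3$, Theorem \ref{1.1}(1) upgrades this to a diffeomorphism, producing a smooth $S^7$-bundle structure on $\mc{M}$.

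I would first identify which values of $p_1$ need to be realized. The Wu relation $p_1\equiv w_2^2 \pmod 2$ applied to a nonspin $\mc{M}$ in our class (where $w_2$ is the mod $2$ reduction of $x$) forces $p_1(\mc{M})$ to be an odd multiple of $x^2$. So the task is to produce, for every odd integer $n$, an $S^7$-bundle $S(V)\to \mm{CP}^3$ in our class with $p_1(S(V))=n\,x^2$.

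For the models I would take unit sphere bundles $S(V)$ of rank-$4$ complex vector bundles $V\to \mm{CP}^3$; such bundles are classified by their Chern classes $(c_1,c_2,c_3)\in \mb{Z}^3$, since $c_4\in H^8(\mm{CP}^3)=0$. I would then verify that $S(V)$ lies in our class: its cohomology ring matches that of $\mm{CP}^3\times S^7$ because the Euler class of $V$ lives in $H^8(\mm{CP}^3)=0$ and the Gysin sequence degenerates; the same Gysin sequence shows that the generator of $H^2(S(V))$ pulls back from $\mm{CP}^3$, so the composite $S(V)\to \mm{CP}^3\hookrightarrow \mm{CP}^4$ supplies a restriction lift; and $S(V)$ is nonspin precisely when $c_1(V)$ is odd. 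Finally, the stable splitting $TS(V)\oplus\mb{R}\cong \pi^\ast(T\mm{CP}^3\oplus V)$ combined with $p_1(\mm{CP}^3)=4x^2$ and $p_1(V)=c_1(V)^2-2c_2(V)$ yields $p_1(S(V))=(4+c_1^2-2c_2)\,x^2$. Fixing $c_1=1$ and letting $c_2$ range over $\mb{Z}$ therefore hits every odd integer, covering the admissible $p_1$ values.

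The main obstacle is the verification that the models $S(V)$ genuinely lie in the class to which Theorem \ref{1.1} applies---in particular, that the ring structure of $H^\ast(S(V))$ matches $H^\ast(\mm{CP}^3\times S^7)$ (not merely the additive structure) and that the restriction-lift condition is satisfied---together with confirming that the map $(c_1,c_2)\mapsto p_1(S(V))$ really surjects onto all odd integers. Once this realization step is settled, Theorem \ref{1.1} finishes both parts of the corollary mechanically.
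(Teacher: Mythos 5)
Your proposal is correct and follows essentially the same strategy as the paper: realize every admissible value of $p_1$ by the total space of a linear $S^7$-bundle over $\mm{CP}^3$, check that the model lands in the class covered by Theorem~\ref{1.1}, and then invoke parts 1) and 2) of that theorem to transport the bundle structure. The paper constructs the models slightly differently: since $\widetilde{KO}(\mm{CP}^3)\cong\mb{Z}$ is generated by the reduced real Hopf bundle $r(\mc{H})-2$ with $p_1=x^2$, and $[\mm{CP}^3,\mm{BO}(8)]\to[\mm{CP}^3,\mm{BO}]$ is a bijection, one immediately gets for every $k\in\mb{Z}$ an $8$-plane real bundle $\eta$ with $p_1(\eta)=kx^2$, whose sphere bundle has $p_1=(4+k)x^2$; this avoids having to argue which pairs $(c_1,c_2)$ of Chern classes of rank-$4$ complex bundles over $\mm{CP}^3$ are realizable. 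Your extra step of pinning down the admissible $p_1$ values as the odd multiples of $x^2$ via the Wu relation $p_1\equiv w_2^2\pmod 2$, and your explicit verification that the model sphere bundles have the right cohomology ring, are nonspin, and admit restriction lifts, are all correct and in fact make explicit some details the paper leaves to the reader; the realizability of $(c_1,c_2)=(1,c_2)$ for every $c_2\in\mb{Z}$ does hold (e.g.\ by computing Chern classes of $4+a(\mc{H}-1)+b(\mc{H}-1)^2+c(\mc{H}-1)^3$ for $a=1$), so the remaining gap you flagged is fillable.
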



In Riemannian geometry, many well-known results show that the curvature of a manifold can control its partial topological properties, such as the most fundamental theorems, Bonnet-Myers Theorem and Synge Theorem.
 On the other hand, 
one of the most interesting problems in Riemannian geometry is whether a smooth manifold admits a metric of non-negative sectional curvature or positive sectional curvature. Here, we use the topology of manifolds to investigate the metrics.
\begin{theorem}\label{1.3}
	{\it Let $\mathcal M$ be nonspin and admit a restriction lift with $p_1(\mc{M})\ge 4$, then there exists a homotopy sphere $\Sigma^{13}$ so that $\mc{M}\#\Sigma^{13}$ admits a metric of non-negative sectional curvature. If $\mc{M}$ additionally satisfies $p_1(\mc{M})\ne 0 \mod 3$, it admits a metric of non-negative sectional curvature.}
\end{theorem}

For the general spin case, the same theorem as Theorem \ref{1.1} is unclear. For the spin manifolds $\mathcal M=\mm{CP}^3\times \Sigma^7$ where $\Sigma^7$ is a homotopy $7$-sphere, we know that $\mm{CP}^3\times \Sigma^7$ is diffeomorpic to $\mm{CP}^3\times S^{7}$ where $S^7$ is the standard $7$-sphere, which has been proved in [\cite{Bro}, the proof of Theorem 6.1, pp.37]. Thus,
	for every homotopy sphere $\Sigma^7$, $\mm{CP}^3\times \Sigma^7$ admits a metric of non-negative sectional curvature. 
	
	We mainly use surgery theory \cite{K} to prove (1), (2) and (3) of Theorem \ref{1.1}. In Section \ref{normal5}, we establish a suitable bordism theory for considered manifolds in the smooth category. In Section \ref{obstruction}, we consider the surgery obstruction. Then, we compute a certain bordism group in Section \ref{computation}. To classify  considered manifolds up to homotopy equivalence, we give a model in Section \ref{homosec}. The proofs of Theorem \ref{1.1} and Corollary \ref{1.2} are presented in Section \ref{proofmain}. At last, in Section \ref{metricSection}, we discuss the metrics of the sphere bundles of certain $8$-dimensional vector bundles and prove Theorem \ref{1.3}.  

\section{The normal $5$-smoothing}\label{normal5}
Let $\pi: {B}\to \mm{BO}$ be a fibration over the classifying space $\mm{BO}$. Given a smooth manifold $M$, we call the classifying map of the stable normal bundle of $M$ as stable normal Gauss map, denoted by $\mathscr{N}_{M} :M\to \mm{BO}$. 
As described in [\cite{K}, Definition, pp.711], a normal ${B}$-structure of $M$ is a lift $ \bar{\mathscr{N}}_{M}: M\to {B}$ of $ {\mathscr{N}}_{M}$. $\bar {\mathscr{N}}_{M}$ is called a normal $(B,\ell)$-smoothing when it is also a $(\ell + 1)$-equivalence, i.e. the induced homomorphism $\bar{\mathscr{N}}_{M\ast}:\pi_n(M)\to \pi_n(B)$ is an isomorphism for $n<\ell+1$, an epimorphism for $n=\ell+1$.

Let $\mathcal M$ be a simply connected, closed, smooth $13$-dimensional manifold with cohomology ring $H^\ast(\mathcal M)\cong H^\ast(\mm{CP}^3\times S^7)$.
Assume that $\mc{M}$ admits a {\it restriction lift} $f:\mc{M}\to \mm{CP}^4$. Note that the first Pontryagin class $p_1(\mathcal{M})$ of $\mathcal M$ equals to $sx^2\in H^4(\mathcal M)$, 
the second Stiefel-Whitney class $w_2(\mathcal{M})$ is $s x\mod{2}\in H^2(\mathcal M;\mb{Z}_2)$ for some integer $s$. Next we construct a vector bundle $\xi$ over $\mm{CP}^4$ so that its first Pontryagin class depends on the integer $s$.

Let $\mathcal{H}$ denote the Hopf bundle over $\mathrm{CP}^4$. If $s\ge 0$, let $\xi$ be the complementary bundle of the Whitney sum $s\mathcal{H}$; if $s<0$, let $\xi$ be the Whitney sum $-s\mathcal{H}$. Formally $\xi=-s\mc{H}$. Note that the first and second Chern classes of $\xi$ are 
$c_1(\xi)=-sx$, $c_2(\xi)=\frac{s(s+1)}{2}x^2$,
the first Pontryagin class $p_1(\xi)=-sx^2$. 

Let $\mm{BO}\langle n \rangle$ be the $(n-1)$-connected cover of $\mm{BO}$, i.e. $\mm{BO}\langle n \rangle$ is $(n-1)$-connected, there exists a fibration $\mm{BO}\langle n \rangle \to \mm{BO}$ inducing isomorphisms on $\pi_k$ for $k\ge n$. It is well-known that $\mm{BSO}=\mm{BO}\langle 2\rangle$, $\mm{BSpin}=\mm{BO}\langle 4\rangle$ and $\mm{BString}=\mm{BO}\langle 8\rangle$. Furthermore, there is sequence $\mm{BO}\langle 8\rangle\to \mm{BO}\langle 4\rangle\to \mm{BO}\langle 2\rangle\to \mm{BO}$ where each map is a fibration. From the unstable variation of the Postnikov tower in [\cite{BuNa}, pp.44], we get the obstructions of  a map $X\to \mm{BO}$ lift to $\mm{BO}\langle n\rangle$ for $n=2,4,8$. Indeed a map $X\to \mm{BO}$ is the classifying map of a stable vector bundle $\eta$ over $X$. If $w_2(\eta)=w_1(\eta)=0$, $H^4(X)$ is torsion-free, $p_1(\eta)=0$, then the map has a lift to $\mm{BO}\langle 8\rangle$.

Let $\mathscr{N}_{\mc{M}}$ denote both the stable normal Gauss map $\mc{M}\to \mm{BO}$ and the stable normal bundle of $\mc{M}$. By computing the
first Pontryagin class and the second Stiefel-Whitney class of the difference bundle $\mathscr{N}_{\mc{M}}-f^\ast \xi$, we have $w_2(\mathscr{N}_{\mc{M}}-f^\ast \xi)=0$ and $p_1(\mathscr{N}_{\mc{M}}-f^\ast \xi)=0$. Recalling the cohomology ring of $\mathcal M$, we have that the classifying map of the difference bundle has a lift to $\mm{BO}\langle 8\rangle$, namely $\nu:\mathcal M\to \mm{BO}\langle 8\rangle$. Let $\gamma_8$ be the universal bundle over $\mm{BO}\langle 8 \rangle$, i.e. the pull-back bundle of the universal bundle over $\mm{BO}$ through the fibration $\mm{BO}\langle 8 \rangle\to \mm{BO}$. Then we have $\nu^\ast\gamma_8\cong \mathscr{N}_{\mc{M}}-f^\ast \xi$.

Let ${B}=\mm{BO}\langle 8 \rangle \times  \mm{CP}^4$. There is a product bundle, namely $\gamma_8\times \xi$, over $B$.
Its classifying map, denoted by
$\pi(\xi): {B}\to \mm{BO}$, can be factored as the composition $B\hookrightarrow E_{\pi} \to \mm{BO}$ of a homotopy equivalence and a fibration by [\cite{Hatcher}, Proposition 4.64, pp.407]. Hence we regard the map $\pi(\xi)$ as a fibration.

Now we claim that the composition 
$$g:\mc{M}\stackrel{\Delta}{\longrightarrow }\mc{M}\times \mc{M}\stackrel{\nu\times f}{\longrightarrow}\mm{BO}\langle 8 \rangle \times \mm{CP}^4$$ is a normal ${B}$-structure of $\mc{M}$ where $\Delta$ is the diagonal map. Recalling the Whitney sum of vector bundles, we have $\mathscr N_{\mc{M}}\cong \nu^\ast\gamma_8\oplus f^\ast\xi\cong g^\ast(\gamma_8\times \xi)$. By the definition of normal $B$-structure, the claim follows. Indeed it is easy to check that $g$ induces isomorphisms on $H_n$ for $0\le n\le 6$. So $g$ is a normal $(B,5)$-smoothing of $\mc{M}$.

All $n$-dimensional manifolds with normal ${B}$-structures under the cobordant relation and the disjoint union as operation form a bordism group $\Omega_n^{\mm{O}\langle 8 \rangle}(\xi)$ [\cite{Sw}, pp.226] which is isomorphic to the homotopy group $\pi_n(\mm{M{B}})$ by the Pontryagin-Thom isomorphism. Here, $\mm{M{B}}=\mm{MO}\langle 8 \rangle\wedge \mm{M}\xi $ where $\mm{M}\xi$ and $\mm{MO}\langle 8\rangle$ are the Thom spectra of the bundles $\xi$ and $\gamma_8$.
A bordism class of $\Omega_n^{\mm{O}\langle 8 \rangle}(\xi)$ is denoted by a pair $[M,\bar{\mathscr N}_M]$ where $M$ is a $n$-dimensional manifold, $\bar{\mathscr N}_M$ is a normal $B$-structure of $M$. 
Here we need to note that the bordism group $\Omega_n^{\mm{O}\langle 8 \rangle}(\xi)$ depends on the bundle $\xi$. Let $\mc{M}$, $\mc{M}'$ admit {\it restriction lifts}. When $p_1(\mc{M})=p_1(\mc{M}')$, we can take the same bundle $\gamma_8\times \xi$ over $B$ such that the bordism classes $[\mc{M},g]$ and $[\mc{M}',g']$ lie in the same bordism group $\Omega_{13}^{\mm{O}\langle 8 \rangle}(\xi)$.
In Section \ref{computation}, we will consider whether the bordism classes are equal.

\section{Surgery obstruction}\label{obstruction}

Recall $B=\mm{BO}\langle 8 \rangle \times \mm{CP}^4$ and the bundle $\gamma_8\times \xi$ over it. 
In this section, we assume that $\mc{M}$ and $\mc{M}'$ admit {\it restriction lifts}, $p_1(\mc{M})=p_1(\mc{M}')$. Besides,
assume that the bordism class $[\mc{M},g]$ equals to $[\mc{M}',g']$ in $\Omega_{13}^{\mm{O}\langle 8 \rangle}(\xi)$ where $g,g'$ are the normal $(B,5)$-smoothings of $\mc{M},\mc{M}'$, defined in Section \ref{normal5}.

 Then there exists a $14$-dimensional compact manifold $W$ with a normal $B$-structure $\phi:W\to B$ such that its boundary is the disjoint union $\mathcal M\sqcup -\mathcal M^\prime$ where the orientation of $-\mathcal M^\prime$ is opposite to $\mathcal M^\prime$, the composition of the embedding $\mc{M}\to W$ (resp. $\mc{M}'\to W$) and $\phi$ is homotopy equivalent to the normal $(B,5)$-smoothing $g$ (resp. $g'$). 

 From surgery theory [\cite{K}, Section 3, Corollary 1], we can assume that $\phi:W\to  B$ is a $7$-equivalence. So the embedding 
$i:\mathcal M\to W$ induces isomorphisms $H_n(\mathcal M)\cong H_n(W)$ for $0\le n\le 6$. Applying the relative Poincar\'e Duality, i.e. $H^n(W,\mathcal M)\cong H_{14-n}(W,\mathcal M')$, the (co)homology long exact sequences for the pairs $(W,\mathcal M)$ and $(W,\mathcal M')$, we have that $H_7(W)\cong H_7(W,\mathcal M)\oplus H_7(\mathcal M)$ is torsion-free. 


Considering the surgery obstruction for replacing $W$ with an h-cobordism, we first transform the elements in $H_7(W,\mathcal M)$ into the kernel of the homomorphism $\phi_\ast:\pi_7(W)\to \pi_7(B)$. There is a  commutative diagram as follows.
 \[
\xymatrix@C=.3cm{
&\pi_8(B)=\mb{Z}\ar[r]^-{} \ar[d]^-{}& \pi_8(B,W) \ar[r]^-{} \ar[d]^-{\cong}& \pi_7(W)  \ar[r]^-{\phi_\ast} \ar[d]^-{h_\ast}&\pi_7(B)=0 \ar[d]^-{}\\
H_8(W)=0 \ar[r]& H_8(B)=\mb{Z}^2\ar[r]^-{} &H_8(B,W) \ar[r]^-{}& H_7(W)\ar[r]^-{} &H_7(B)=0
}
\]
 The first (resp. the second) row is the long exact sequence of homotopy (resp. homology) groups for the map $\phi:W\to B$, a down arrow is a Hurewicz homomorphism. By chasing the diagram, we have
\begin{lemma}\label{surgery1}
	$\mm{ker}\phi_\ast=\pi_7(W)\cong H_7(W)\oplus \mb{Z}\cong H_7(W,\mathcal M)\oplus \mb{Z}^2.$
\end{lemma}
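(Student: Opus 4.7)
The plan is to execute the diagram chase suggested by the commutative square, once the middle vertical map is identified as an isomorphism via relative Hurewicz. Since $B=\mm{BO}\langle 8\rangle\times\mm{CP}^4$ has $\pi_7=0$ (the first factor is $7$-connected, and $\mm{CP}^4\hookrightarrow\mm{CP}^\infty=K(\mb{Z},2)$ is a $9$-equivalence), the identification $\ker\phi_\ast=\pi_7(W)$ is immediate. The remaining entries of the diagram follow from standard facts: $\pi_8(B)=\pi_8(\mm{BO}\langle 8\rangle)=\mb{Z}$ by Bott periodicity, $H_7(B)=0$ since neither factor has odd cohomology in the relevant range, and $H_8(B)\cong\mb{Z}^2$ by K\"unneth, with one copy from $H_8(\mm{BO}\langle 8\rangle)$ and one from $\langle x^4\rangle\subset H_8(\mm{CP}^4)$.

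Because $\phi$ is a $7$-equivalence of simply connected spaces, the pair $(B,W)$ satisfies $\pi_k(B,W)=0$ for $k\le 7$, so the relative Hurewicz theorem produces the required isomorphism $\pi_8(B,W)\cong H_8(B,W)$ that serves as the middle vertical map. The bottom row of the diagram is then the homology exact sequence of the pair $(B,W)$; using $H_8(W)=0$ (which follows from Poincar\'e--Lefschetz duality together with the long exact sequences of $(W,\mc{M})$ and $(W,\partial W,\mc{M})$, exactly as in the paragraph preceding the lemma) and $H_7(B)=0$, it collapses to a short exact sequence $0\to\mb{Z}^2\to H_8(B,W)\to H_7(W)\to 0$. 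Since $H_7(W)$ has already been shown to be torsion-free, this sequence splits, yielding $H_8(B,W)\cong\mb{Z}^2\oplus H_7(W)$ with $\mb{Z}^2$ as a direct summand.

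To conclude, I need to identify the image of $\pi_8(B)\to\pi_8(B,W)$ as a rank-one direct summand. Commutativity of the left-hand square factors this map through the Hurewicz map $\pi_8(B)\to H_8(B)$, and since $\mm{BO}\langle 8\rangle$ is $7$-connected its Hurewicz map is an isomorphism, so $\pi_8(B)\to H_8(B)$ is precisely the inclusion of the $\mm{BO}\langle 8\rangle$-summand $\mb{Z}\hookrightarrow\mb{Z}^2$. Quotienting by this copy of $\mb{Z}$ in the top exact sequence then gives $\pi_7(W)\cong\mb{Z}\oplus H_7(W)$, and substituting the previously noted $H_7(W)\cong H_7(W,\mc{M})\oplus\mb{Z}$ yields the second isomorphism of the lemma. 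I expect the main subtlety to be precisely this summand identification: were the image of $\pi_8(B)$ not to land in the pure $\mb{Z}^2$ part of the K\"unneth splitting of $H_8(B,W)$, the cokernel could acquire an extension, but naturality of the K\"unneth decomposition under $(B,\varnothing)\hookrightarrow(B,W)$ rules this out.
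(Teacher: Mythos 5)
Your proof is correct and fills in precisely the diagram chase that the paper asserts without detail: the identification of $\pi_7(B)=0$ and $\pi_8(B)=\mb{Z}$, the relative Hurewicz isomorphism $\pi_8(B,W)\cong H_8(B,W)$, the split short exact sequence $0\to H_8(B)\to H_8(B,W)\to H_7(W)\to 0$, and the observation via naturality of Hurewicz that the image of $\pi_8(B)$ is the $\mm{BO}\langle 8\rangle$-summand $\mb{Z}\subset\mb{Z}^2=H_8(B)$. This is the same approach as the paper, carried out carefully.
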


By the smooth embedding theorem \cite{Milnor1965} and Lemma \ref{surgery1}, every generator $u\in H_7(W,\mathcal M)$ can be represented by an embedding $S^7\to \mathring W$ where $\mathring W$ is the interior of $W$. From \cite{KervaMilnor},
  any $7$-dimensional vector bundle  over $S^7$ is trivial. Hence, the normal bundle of this embedding is trivial, and thus the total space of the normal bundle is $S^7\times \mb{R}^7$. By the tubular neighborhood theorem, there exists an embedding from the total space of the disk bundle of the normal bundle, i.e. $S^7\times D^7$ to $\mathring W$. We denote the embedding by 
   $\bar{u}:S^7\times D^7\to \mathring{W}$. Furthermore, $u=\bar{u}|_{S^7\times \{0\}}$ where $\{0\}$ is the center of $D^7$. Then we define
$$\mathscr W := D^8 \times D^7 \cup_{\bar u} W \times I $$
where we consider $\bar u$ as a map to $W \times \{1\}$. $\mathscr W$ is called the trace of a surgery via $\bar u$. The boundary of $\mathscr W$ is $W \cup(\partial W \times I)\cup W'$ and we call $W'$ the result of a surgery of index $8$ via $\bar u$. More explicitly,
$$W' = D^8\times S^6\cup_{\bar u} (W -\bar{u}(S^7 \times \mathring{D}^7)). $$
Here we note [\cite{K}, pp.715]
\begin{equation}
	\mathscr W\simeq D^8\cup_{u} W\simeq D^7\cup W'. \label{homotopyequiW}
\end{equation}

Now we claim that the normal $B$-structure $\phi:W\to B$ of $W$ extends to a normal $B$-structure of $\mathscr W$. To prove the claim, it is sufficient to
check the condition in [\cite{K}, Section 3, iii) of Lemma 2]. By [\cite{AtiHi}, Theorem 1], there exists a real vector bundle $\theta$ over $S^8$ with $w_8(\theta)\ne 0$. Let $\Theta :S^8\to \mm{BO}$ be the classifying map of the stable vector bundle represented by $\theta$. Then we have $\Theta^\ast(w_8)\ne 0$ where $w_8\in H^8(\mm{BO};\mb{Z}_2)$. Since that the map $\pi(\xi):B\to \mm{BO}$ induces isomorphism on $\pi_8$, the desired result follows. 

Note that the boundary of $ W'$ is also $\mathcal M\sqcup -\mathcal M^\prime$. Therefore, the normal $B$-structure of $\mathscr W$ induces a normal $B$-structure $\phi':W'\to B$ of $ W'$ so that $g\simeq \phi'|_{\mc{M}}$ and $g'\simeq \phi'|_{\mc{M}'}$.   

Finally we show that $\mm{rank}(H_7(W',\mc{M}))=\mm{rank}(H_7(W,\mc{M}))-2$, $H_n(W',\mc{M})=0$ for $n>0$ and $n\ne 7$.
 
 By Equation (\ref{homotopyequiW}) and the relative Poincar\'e Duality, for $1\le n\le 5$ or $9\le n$, we have $H_n(W',\mc{M})=0$.
 
 By $\mathscr W\simeq D^8\cup_u W$, we have that $H_8(\mathscr W)=0$, $H_7(\mathscr W)$ is torsion-free, the embedding $\mc{M}\to \mathscr W$ induces an isomorphism from $H_7(\mc{M})$ to a summand of $H_7(\mathscr W)$. Thus, by $\mathscr W\simeq D^7\cup W'$, we have $H_8( W')=0$, $H_7( W')$ is isomorphic to a summand of $H_7(\mathscr W)$. Furthermore, by the composition $\mc{M}\to W'\to \mathscr W$, $H_7(\mc{M})$ is isomorphic to a summand of $H_7(W')$. Hence $H_8(W',\mc{M})=0$, $H_7(W',\mc{M})$ is torsion-free. 
 
 Similarly we have the same results for the pair $(W',\mc{M}')$ as above. 
 By the universal coefficient theorem, we have $H^8(W',\mc{M}')=0$. So $H_6(W',\mc{M})=0$ by the relative Poincar\'e duality. 

 Note that $H_6(\mc{M})\cong H_6(B)$. By
  the composition $\mc{M}\to W'\to B$ and $H_6(W',\mc{M})=0$, we have $H_6(\mc{M})\cong H_6(W')$. Thus, by the homology long exact sequence and the above conclusions for the pair $(W',\mc{M})$, we have $H_7(W')\cong H_7(W',\mc{M})\oplus H_7(\mc{M})$. Now we consider the homology long exact sequence for the pair $(\mathscr W,W')$
  $$0\to H_7(W')\to H_7(\mathscr W)\to H_7(\mathscr W,W')=\mb{Z}\to H_6(W')\to H_6(\mathscr W)$$
By $\mathscr W\simeq D^8\cup_u W$, $\mm{rank}(H_7(\mathscr W))=\mm{rank}(H_7(W))-1$. By the composition $\mc{M}\to W'\to \mathscr W$ and the isomorphisms $H_6(\mc{M})\cong H_6(\mathscr W)$, $H_6(\mc{M})\cong H_6( W')$, we have $H_6(W')\cong H_6(\mathscr W)$. Hence $\mm{rank}(H_7(W'))=\mm{rank}(H_7(\mathscr W))-1=\mm{rank}(H_7(W))-2$. Moreover, $\mm{rank}(H_7(W',\mc{M}))=\mm{rank}(H_7(W,\mc{M}))-2$. 	 

Indeed $R=\mm{rank}(H_7(W,\mc{M}))$ is even. Repeating the surgery as above $R/2$ times, we can get the needed h-cobordism.
   
  \begin{lemma}\label{hcobordism}
  If $[\mathcal M,g]=[\mc{M}',g']\in \Omega_{13}^{\mm{O}\langle 8\rangle}(\eta)$, then there is a smooth $h$-cobordism
  $(W,\mathcal M,\mathcal M^\prime).$
  \end{lemma} 

\section{The computations of certain bordism groups}\label{computation}
 Recall $\Omega_\ast^{\mm{O}\langle 8 \rangle}(\xi)\cong\pi_\ast(\mm{M{B}})$ and $\mm{M{B}}=\mm{MO}\langle 8 \rangle\wedge \mm{M}\xi $ where $\mm{M}\xi$ and $\mm{MO}\langle 8\rangle$ are the Thom spectra of the bundles $\xi$ and $\gamma_8$ (cf. Section \ref{normal5}).

There is an Atiyah-Hirzebruch spectral sequence (AHSS) as follows
 $$E_2^{p,q}\cong H_p(\mm{M}\xi
 ;\pi_q(\mm{MO}\langle 8\rangle))\Longrightarrow \mm{MO}\langle 8\rangle_\ast(\mm{M}\xi)\cong \pi_\ast(\mm{MO}\langle 8\rangle\wedge \mm{M}\xi).$$
 Recall the $E_2$-terms of the Adams spectral sequence (ASS) for $\mm{MO} \langle  8 \rangle $ as in Fig.\ref{Fig.1}. In dimension $i \le 14$ \cite{Giambalvo,HoRa1995}, $\pi_i(\mm{MO} \langle  8 \rangle )$ is as follows.

\begin{center}
\begin{tabular}{|c|c|c|c|c|c|c|c|c|c|c|c|c|c|c|c|}
\hline
$i$ & 0&1& 2& 3&4& 5& 6&7&8&9\\
\hline
$\pi_i(\mm{MO} \langle  8\rangle )$ & $\mathbb{Z}$ & $\mathbb{Z}_2$ & $\mathbb{Z}_2$ & $\mathbb{Z}_{24}$& 0& 0& $\mathbb{Z}_2$ & 0& $ \mathbb{Z}\oplus \mathbb{Z}_2$ & $\mathbb{Z}_2\oplus \mathbb{Z}_2 $\\
\hline

\end{tabular}
\end{center}
\begin{center}
\begin{tabular}{|c|c|c|c|c|c|c|c|c|c|c|c|c|c|c|c|}
\hline
$i$ &10&11&12&13&14\\
\hline
$\pi_i(\mm{MO} \langle  8 \rangle )$  & $\mathbb{Z}_6 $ & 0& $\mathbb{Z}$ & $\mathbb{Z}_3$ & $\mathbb{Z}_2$\\
\hline

\end{tabular}
\end{center}

\begin{figure}
\centering
\begin{tikzpicture}[>=stealth,scale=1,line width=0.5pt]{enumerate}

\pgfmathsetmacro{\ticker}{0.125}

\coordinate [label=225:$0$](A) at (0,0);
\coordinate (B) at (0,5.6);
\coordinate (D) at (10.2,0);
\draw(D)--(A)--(B);

\coordinate [label=left:$s$](E) at ($(B)+(-0.4,-0.2)$);
\coordinate [label=below:$t-s$](F) at ($(D)+(-0.2,-0.4)$);

\foreach \i/\texti  in {1,2,3,4,5,6,7,8,9,10,11,12,13,14,15,16,17} {
\draw (0.6*\i,0) --(0.6*\i,\ticker) node[label=below:\texti]{};
}
\foreach \j/\textj  in {1,2,3,4,5,6,7,8} {
\draw (0,0.7*\j) --(\ticker,0.7*\j) node[label=left:\textj]{};
}
{\tiny
\coordinate[label=right:$h_0$] (I) at (0.1,0.7);
}
{\tiny
\coordinate[label=right:$h^2_0$] (I) at (0.1,1.4);
}
\path[name path = row1](0,0.7)--(10.2,0.7);
\path[name path = row2](0,1.4)--(10.2,1.4);
\draw[name path = row3][dashed] (0,2.1) -- (10.2,2.1);
\draw[name path = xieh1](0,0)--(0.6,0.7)--(1.2,1.4)--(1.8,2.1);
\path[draw,fill,name intersections={of = row1 and xieh1,by=h1}](h1)circle(1pt);
{\tiny
\coordinate[label=right:$h_1$] (I) at (0.7,0.7);
}
\path[draw,fill,name intersections={of = row2 and xieh1,by=h12}](h12)circle(1pt);
{\tiny
\coordinate[label=left:$h^2_1$] (I) at (1.1,1.4);
}
\path[draw,fill,name intersections={of = row3 and xieh1,by=h13}](h13)circle(1pt);
{\tiny
\coordinate[label=above:${h^3_1=h^2_0h_2}$] (I) at (1.8,2.1);
}
\draw[name path = c3] (1.8,0.7) -- (h13);
\path[draw,fill,name intersections={of = c3 and row2,by=h0h2}](h0h2)circle(1pt);
\path[draw,fill,name intersections={of = c3 and row1,by=h2}](h2)circle(1pt);
{\tiny
\coordinate[label=left:${h_2}$] (I) at (1.7,0.8);
}
\draw[name path = xieh2](0,0)--(h2)--(3.6,1.4);
\path[draw,fill,name intersections={of = xieh2 and row2,by=h22}](h22)circle(1pt);
{\tiny
\coordinate[label=left:${h^2_2}$] (I) at (3.5,1.5);
}
\path[name path = row4](0,2.8)--(10.2,2.8);
\path[name path = row5](0,3.5)--(10.2,3.5);
\path[name path = row6](0,4.2)--(10.2,4.2);
\path[name path = row7](0,4.9)--(10.2,4.9);
\path[name path = row8](0,5.6)--(10.2,5.6);
\path[name path = c8](4.8,0)--(4.8,5.6);
\path[draw,fill,name intersections={of = c8 and row3,by=c0}](c0)circle(1pt);
{\tiny
\coordinate[label=left:${c_0}$] (I) at (4.7,2.0);
}
\path[draw,fill,name intersections={of = xieh2 and row2,by=h22}](h22)circle(1pt);
\draw(4.8,2.8)--(4.8,5.6);
\path[draw,fill,name intersections={of = c8 and row4,by=omega}](omega)circle(1pt);
{\tiny
\coordinate[label=left:${\omega}$] (I) at (4.7,2.8);
}
\path[draw,fill,name intersections={of = c8 and row5,by=h0omega}](h0omega)circle(1pt);
\path[draw,fill,name intersections={of = c8 and row6,by=h02omega}](h02omega)circle(1pt);
\path[draw,fill,name intersections={of = c8 and row7,by=h03omega}](h03omega)circle(1pt);
\path[draw,fill,name intersections={of = c8 and row8,by=h04omega}](h04omega)circle(1pt);
\draw[name path = xiec0](4.8,2.1)--(5.4,2.8);
\path[draw,fill,name intersections={of = row4 and xiec0,by=h1c0}](h1c0)circle(1pt);
\draw[name path = xieomega](4.8,2.8)--(6.6,4.9);
\path[draw,fill,name intersections={of = row5 and xieomega,by=h1omega}](h1omega)circle(1pt);
\path[draw,fill,name intersections={of = row6 and xieomega,by=h12omega}](h12omega)circle(1pt);
\path[draw,fill,name intersections={of = row7 and xieomega,by=h13omega}](h13omega)circle(1pt);
{\tiny
\coordinate[label=above:${h_1^3\omega=h^2_0h_2\omega}$] (I) at (6.3,4.9);
}
\draw[name path = c11](h13omega)--(6.6,3.5);
\path[draw,fill,name intersections={of = row5 and c11,by=h2omega}](h2omega)circle(1pt);
\path[draw,fill,name intersections={of = row6 and c11,by=h0h2omega}](h0h2omega)circle(1pt);
{\tiny
\coordinate[label=left:${h_2\omega}$] (I) at (6.6,3.6);
}
\draw[name path = xie2omega](omega)--(8.4,4.2);
\path[draw,fill,name intersections={of = row6 and xie2omega,by=h22omega}](h22omega)circle(1pt);
{\tiny
\coordinate[label=above:${h^2_2\omega=h^2_0d_0}$] (I) at (h22omega);
}
\draw[name path = c12](7.2,2.1)--(7.2,5.6);
\path[draw,fill,name intersections={of = row3 and c12,by=tau}](tau)circle(1pt);
\path[draw,fill,name intersections={of = row4 and c12,by=h0tau}](h0tau)circle(1pt);
\path[draw,fill,name intersections={of = row5 and c12,by=h02tau}](h02tau)circle(1pt);
\path[draw,fill,name intersections={of = row6 and c12,by=h03tau}](h03tau)circle(1pt);
\path[draw,fill,name intersections={of = row7 and c12,by=h04tau}](h04tau)circle(1pt);
\path[draw,fill,name intersections={of = row8 and c12,by=h05tau}](h05tau)circle(1pt);
{\tiny
\coordinate[label=left:${\tau}$] (I) at (7.1,2);
}
\draw[name path = c14](h22omega)--(8.4,2.8);
\path[draw,fill,name intersections={of = row4 and c14,by=d0}](d0)circle(1pt);
\path[draw,fill,name intersections={of = row5 and c14,by=h0d0}](h0d0)circle(1pt);
{\tiny
\coordinate[label=below:${d_0}$] (I) at (8.3,2.8);
}
\draw[name path = xied0](d0)--(9,3.5);
{\tiny
\coordinate[label=right:${h^2_0\kappa=h_1d_0}$] (I) at (9,3.5);
}
\path[draw,fill,name intersections={of = row5 and xied0,by=h1d0}](h1d0)circle(1pt);
\draw[name path = c15](h1d0)--(9,2.1);
\path[draw,fill,name intersections={of = row3 and c15,by=kappa}](kappa)circle(1pt);
\path[draw,fill,name intersections={of = row4 and c15,by=h0kappa}](h0kappa)circle(1pt);
{\tiny
\coordinate[label=left:${\kappa}$] (I) at (8.9,2);
}
\draw[dashed](7.2,2.1)--(6.6,3.5);
\draw[dashed](7.2,2.8)--(6.6,4.2);\draw[dashed](7.2,3.5)--(6.6,4.9);
\draw[dashed](9,2.1)--(8.4,3.5);\draw[dashed](9,2.8)--(8.4,4.2);

\end{tikzpicture}
\renewcommand{\figurename}{Fig}
 \caption{ $\mm{Ext}^{s,t}_{\mathscr{A}_2}(\mathbb{Z}_2, \mathbb{Z}_2)$}
  \label{Fig.1}
\end{figure}

In the AHSS for $\Omega_{13}^{\mm{O}\langle 8\rangle}(\xi)$, all nontrivial $E_2$-terms are $E_2^{0,13}=\mb{Z}_3$ and $E_2^{4,9}=\mb{Z}_2^2$.
Let $G=E_\infty^{4,9}$. Obviously the $2$-primary part ${_2\Omega}_{13}^{\mm{O}\langle 8\rangle}(\xi)={_2\pi}_{13}(\mm{MB})={\pi}_{13}(\mm{MB})/K^2$ is isomorphic to $G=\mb{Z}_2^n$ where $n=0,1$ or $2$ where $K^2$ is a subgroup of all elements of finite order prime to $2$. Next, we will observe the $G$ in the following Adams spectral sequence (ASS) 
$$E_2^{s,t}(\mm{ASS})=\mm{Ext}_{\mathscr A}^{s,t}(H^\ast(\mm{MB};\mb{Z}_2),\mb{Z}_2)\Longrightarrow {_2\pi}_{t-s}(\mm{MB})$$
where $\mathscr A$ is the mod $2$ Steenrod algebra. Note that $H^\ast(\mm{MB};\mb{Z}_2)\cong H^\ast(\mm{MO}\langle 8\rangle ;\mb{Z}_2)\otimes H^\ast(\mm{M}\xi;\mb{Z}_2)$.
By \cite{Giambalvo}, as a module over $\mathscr{A}$ in dimensions $<16$, $H^\ast(\mm{MO}\langle 8 \rangle; \mathbb{Z}_2)$ is isomorphic to the quotient $\mathscr{A}\slash\slash\mathscr{A}_2$  of $\mathscr{A}$ by the left ideal generated by $\mathscr{A}_2$, where $\mathscr{A}_2$ is the Hopf subalgebra of $\mathscr{A}$ generated by $\mathrm{Sq}^1$, $\mathrm{Sq}^2$ and $\mathrm{Sq}^4$.
Therefore, $E_2^{s,t}(\mm{ASS})\cong \mm{Ext}_{\mathscr A_2}^{s,t}(H^\ast(\mm{M}\xi;\mb{Z}_2),\mb{Z}_2)$ in the range $t-s<15$ by the change-of-rings theorem.
By Thom isomorphism, $H^\ast(\mm{M}\xi;\mb{Z}_2)\cong H^\ast(\mm{CP}^4;\mb{Z}_2)\cup U$ as $\mb{Z}_2$-vector space where $U$ is the stable Thom class of the bundle $\xi$. In particular, for any $a\in H^\ast(\mm{CP}^4;\mb{Z}_2)$
$$\mm{Sq}^n(aU)=\Sigma_{i=0}^n\mm{Sq}^ia\cup \mm{Sq}^{n-i}U,\quad \mm{Sq}^iU=w_i(\xi) U$$
where $w_i(\xi)$ is the $i$th Stiefel-Whitney of bundle $\xi$. 

Let $\mm{Ext}^{s,t}=\mm{Ext}_{\mathscr A_2}^{s,t}(H^\ast(\mm{M}\xi;\mb{Z}_2),\mb{Z}_2)$.
There is an Algebraic Atiyah-Hirzebruch spectral sequence (AAHSS) (see \cite{WangXu}, Section 5) as follows
$$E_1^{s,m,n}=\mm{Ext}_{\mathscr{A}_2}^{s,s+m}(\mathbb{Z}_2,\mathbb{Z}_2)\otimes H^n(\mm{M}\xi;\mb{Z}_2)\Longrightarrow \mm{Ext}^{s,s+m+n}$$
$\mm{Ext}_{\mathscr{A}_2}^{s,s+m}(\mathbb{Z}_2,\mathbb{Z}_2)$ is computed in \cite{Giambalvo} and shown in Fig.\ref{Fig.1}. One dark dot in Fig.\ref{Fig.1} denotes a generator of the Ext-group, a vertical line $|$ between two dots denotes an $h_0$-action (cf. the product in $\mm{Ext}$-group [\cite{Adams1960}, pp.30]), a slash $/$ with slope $1$ between two dots denotes an $h_1$-action, a slash $/$ with slope ${1}/{3}$ denotes an $h_2$-action, a dashed line represents a differential in the ASS for $\pi_\ast(\mm{MO}\langle 8\rangle)$ (see \cite{Giambalvo}).

\begin{figure}
\centering
\begin{tikzpicture}[>=stealth,scale=1,line width=0.5pt]{enumerate}

\pgfmathsetmacro{\ticker}{0.125}

\coordinate [label=225:$0$](A) at (0,0);
\coordinate (B) at (0,5.6);
\coordinate (D) at (10.2,0);
\draw(D)--(A)--(B);

\coordinate [label=left:$s$](E) at ($(B)+(-0.4,-0.2)$);
\coordinate [label=below:$t-s$](F) at ($(D)+(-0.2,-0.4)$);

\foreach \j/\textj  in {1,2,3,4,5,6,7,8} {
\draw (0,0.7*\j) --(\ticker,0.7*\j) node[label=left:\textj]{};
}
\path[name path = x](D)--(A);
\path[name path = row1](0,0.7)--(10.2,0.7);
\path[name path = row2](0,1.4)--(10.2,1.4);
\draw[name path = row3][dashed] (0,2.1) -- (10.2,2.1);
\path[name path = row4](0,2.8)--(10.2,2.8);
\path[name path = row5](0,3.5)--(10.2,3.5);
\path[name path = row6](0,4.2)--(10.2,4.2);
\draw[name path = row7][dashed](0,4.9)--(10.2,4.9);
\path[name path = row8](0,5.6)--(10.2,5.6);
\path[name path = c1] (0.6,0) -- (0.6,5.6);
\draw[name path = c11] (0.6,0) -- (0.6,0.15);
{\tiny
\coordinate[label=below:$12$] (I) at (0.6,-0.1);
}
\path[name path = c2] (3.8,0) -- (3.8,5.6);
\path[name path = c22] (3.6,0) -- (3.6,5.6);
\draw[name path = c21] (3.8,0) -- (3.8,0.15);
{\tiny
\coordinate[label=below:$13$] (I) at (3.8,-0.1);
}
\path[name path = c3] (7,0) -- (7,5.6);
\draw[name path = c31] (7,0) -- (7,0.15);
\path[fill,name intersections={of = c31 and x,by=14}];
{\tiny
\coordinate[label=below:$14$] (I) at (7,-0.1);
}
\path[name path = c4](1,0)--(1,5.6);

\path[fill,name intersections={of = c1 and row3,by=0tauU}](0tauU)circle(1pt);
\path[fill,name intersections={of = c1 and row4,by=1tauU}](1tauU)circle(1pt);
\path[fill,name intersections={of = c1 and row5,by=2tauU}](2tauU)circle(1pt);
\path[fill,name intersections={of = c1 and row6,by=3tauU}](3tauU)circle(1pt);
\path[fill,name intersections={of = c1 and row7,by=4tauU}](4tauU)circle(1pt);
\path[fill,name intersections={of = c1 and row8,by=5tauU}](5tauU)circle(1pt);
\draw(0tauU)--(5tauU);
{\tiny
\coordinate[label=below:${\tau U}$] (I) at (0tauU);
}

\path[name path = c6](0.8,0)--(0.8,5.6);
\path[fill,name intersections={of = c6 and row4,by=484}](484)circle(1pt);
\path[fill,name intersections={of = c6 and row5,by=584}](584)circle(1pt);
\path[fill,name intersections={of = c6 and row6,by=684}](684)circle(1pt);
\path[fill,name intersections={of = c6 and row7,by=784}](784)circle(1pt);
\path[fill,name intersections={of = c6 and row8,by=884}](884)circle(1pt);
\draw(484)--(884);
{\tiny
\coordinate[label=right:${\omega x^2 U}$] (I) at (484);
}
{\tiny
\coordinate[label=right:${h^4_0\omega x^2 U}$] (I) at (884);
}
\path[fill,name intersections={of = c6 and row3,by=384}](384)circle(1pt);
{\tiny
\coordinate[label=above:${c_0x^2 U}$] (I) at (1.2,2);
}


\path[fill,name intersections={of = c2 and row4,by=494}](494)circle(1pt);
{\tiny
\coordinate[label=right:${h_1c_0 x^2U}$] (I) at (494);
}

\path[fill,name intersections={of = c22 and row5,by=594}](594)circle(1pt);
{\tiny
\coordinate[label=left:${h_1\omega x^2U}$] (I) at (594);
}

\path[fill,name intersections={of = c2 and row6,by=6112}](6112)circle(1pt);

\path[fill,name intersections={of = c2 and row5,by=5112}](5112)circle(1pt);
{\tiny
\coordinate[label=right:${h_2\omega xU}$] (I) at (5112);
}

\path[fill,name intersections={of = c2 and row5,by=5112}](5112)circle(1pt);

\path[fill,name intersections={of = c2 and row6,by=6112}](6112)circle(1pt);
{\tiny
\coordinate[label=right:${h_0h_2\omega xU}$] (I) at (6112);
}
\path[fill,name intersections={of = c2 and row7,by=7112}](7112)circle(1pt);
{\tiny
\coordinate[label=above:${h^2_0h_2\omega xU}$] (I) at (7112);
}
\draw(5112)--(7112);

\path[fill,name intersections={of = c3 and row3,by=tauxU}](tauxU)circle(1pt);
{\tiny
\coordinate[label=below:${\tau xU}$] (I) at (tauxU);
}

\path[name path = c5](4,0)--(4,5.6);

\path[fill,name intersections={of = c3 and row4,by=1tauxU}](1tauxU)circle(1pt);
\path[fill,name intersections={of = c3 and row5,by=2tauxU}](2tauxU)circle(1pt);
\path[fill,name intersections={of = c3 and row6,by=3tauxU}](3tauxU)circle(1pt);
\path[fill,name intersections={of = c3 and row7,by=4tauxU}](4tauxU)circle(1pt);
\path[fill,name intersections={of = c3 and row8,by=5tauxU}](5tauxU)circle(1pt);
{\tiny
\coordinate[label=right:${h_0^5\tau xU}$] (I) at (5tauxU);
}
\draw (tauxU) -- (7,5.6);

\end{tikzpicture}
\renewcommand{\figurename}{Fig}
 \caption{$E_1^{s,m,n}$ ($t=m+n$)}
  \label{Fig.2}
\end{figure}

For $a\in \mm{Ext}_{\mathscr{A}_2}^{s,s+m}(\mathbb{Z}_2,\mathbb{Z}_2)$ and $b\in H^n(\mm{M}\xi;\mb{Z}_2)$, let $ab$ denote an element of $E_1^{s,m,n}$. We list some generators of the $E_1$-terms of the above AAHSS in Fig.\ref{Fig.2}. In particular, all generators of $E_1^{s,m,n}$ are shown when $m+n-s=13$. In Fig.\ref{Fig.2}, $xU$ and $x^2U$ denote the generators of $H^2(\mm{M}\xi;\mb{Z}_2)$ and $H^4(\mm{M}\xi;\mb{Z}_2)$ respectively. One dark dot in Fig.\ref{Fig.2} denotes an element of these $E_1$-terms, a vertical line $|$ between two dots denotes an $h_0$-action if the elements represented by the dots survive in the AAHSS.

Recall that the group $\Omega_{13}^{\mm{O}\langle 8\rangle}(\xi)$ depends on the bundle $\xi$ (cf. Section \ref{normal5}). Indeed, since $\mm{Sq}^iU=w_i(\xi)$, different bundle $\xi$ endows $H^\ast(\mm{M}\xi;\mb{Z}_2)$ with different $\mathscr A_2$-module structures, which are divided into four cases.

\begin{proposition}\label{E2term}
	In the AAHSS $E_1^{s,m,n}\Longrightarrow \mm{Ext}^{s,s+m+n}$, the elements, surviving to $E_\infty$, of $E_1^{s,m,n}$ are as follows where $m+n-s=13$
		\item Case 1: $w_2(\xi)=w_4(\xi)=0$. $h_1c_0x^2U,$ $h_0^sh_2\omega xU$ $(s=0,1)$. 
		\item  Case 2: $w_2(\xi)\ne 0$, $w_4(\xi)=0$. $h_0^sh_2\omega xU$ $(s=0,1,2)$.  
		\item Case 3: $w_2(\xi)=0$, $w_4(\xi)\ne 0$. $h_1c_0x^2U$.  
		 \item  Case 4: $w_2(\xi)\ne 0$, $w_4(\xi)\ne 0$. $h_0^sh_2\omega xU$ $(s=0,1,2)$.
		 
For all cases, $h_0^s\tau xU$ survives to $E_\infty$ in the AAHSS where $s\ge 0$.
\end{proposition}
\begin{proof}
	Let $d_r$ be the differential of the AAHSS
	$$d_r:E_r^{s,m,n}\to E_r^{s+1,m+r-1,n-r}$$
	All $E_1^{s,m,n}$-terms are shown in Fig.\ref{Fig.2} for $n+m=12,13$. 
	
	Case 1: By $\mm{Sq}^2(xU)=x^2U$, we have $d_2(h_1^s\omega x^2U)=h_1^{s+1}\omega xU$ for $s=0,1$. Since $h_0^2h_2\omega xU=h_1^3\omega xU$, $d_2(h_1^2\omega x^2U)=h_1^3\omega xU$. By  the degrees of differentials $d_r$, we have $E_\infty^{\ast,13-n,n}=E_3^{\ast,13-n,n}$.
	
	Case 2: By $\mm{Sq}^2(x^2U)=x^3U$, we have $d_2(c_0 x^3U)=h_1c_0 x^2U$ and $d_2(\omega x^3U)=h_1\omega x^2U$.
	
	Similarly, Cases 3 and 4 follow from the $\mm{Sq}^2$, $\mm{Sq}^4$ actions. By  the degrees of differentials $d_r$, $h_0^s\tau x U\in E_\infty^{3+s,14-n,n}$. 
\end{proof}

The Fig.\ref{Fig.2} and Proposition \ref{E2term} show ${Ext}^{s,13+s}_{\mathscr{A}_2}(H^\ast(\mm{M}\xi;\mathbb{Z}_2) , \mathbb{Z}_2)$ for $s\ge 0$ completely. Some nontrivial Adams differentials are given in the following lemma.
\begin{lemma}\label{d2ASS}
	In Cases $1,2,4$ of Proposition \ref{E2term}, $\tau xU$ represents a generator of $\mm{Ext}^{3,17}$, $h_2\omega xU$ of $\mm{Ext}^{5,18}$. Moreover, there is an Adams differential $d_2:\mm{Ext}^{3,17}\to \mm{Ext}^{5,18}$ so that 
	$d_2(\tau xU)= h_2\omega xU$.
\end{lemma} 
\begin{proof}
Let $\xi|2$ be the restriction bundle over $\mm{CP}^1$ of $\xi$, $\mm{M}\xi|_2$ be its Thom spectrum. On the one hand, by the AHSS 
 $$E_2^{p,q}(|_{2})=H_p(\mm{M}\xi|_2;\pi_{q}(\mm{MO} \langle 8  \rangle))\Longrightarrow\pi_{p+q}(\mm{MO}\langle 8\rangle\wedge \mm{M}\xi|_2)$$
we have $_2\pi_{13}(\mm{MO} \langle 8  \rangle\wedge \mm{M}\xi|_2)=0.$
On the other hand, 
for all cases as described in Proposition \ref{E2term}, ${Ext}^{5+s,18+s}_{\mathscr{A}_2}(H^\ast(\mm{M}\xi|_2;\mathbb{Z}_2) , \mathbb{Z}_2)$
is generated by $h_0^sh_2\omega xU$ ($s\ge 0$); ${Ext}^{3,17}_{\mathscr{A}_2}(H^\ast(\mm{M}\xi|_2;\mathbb{Z}_2) , \mathbb{Z}_2)$
is generated by $\tau xU$, and ${Ext}^{s,14+s}_{\mathscr{A}_2}(H^\ast(\mm{M}\xi|_2;\mathbb{Z}_2) , \mathbb{Z}_2)=0$ ($0\le s\le 2$). Thus $d_2(\tau xU)= h_2\omega xU$ in the $\mm{ASS}$ for the spectrum $\mm{MO}\langle 8\rangle\wedge \mm{M}\xi|_2$. By the naturality  and Proposition \ref{E2term}, the desired result follows.  	
\end{proof}  

Summarizing Fig.\ref{Fig.2}, Proposition \ref{E2term}, and Lemma \ref{d2ASS}, we have
\begin{lemma}\label{2prim}
	In Cases $1,3$ of Proposition \ref{E2term}, ${_2\Omega}_{13}^{\mm{O}\langle 8\rangle}(\xi)=\mb{Z}_2$.
	
	In Cases $2,4$ of Proposition \ref{E2term}, ${_2\Omega}_{13}^{\mm{O}\langle 8\rangle}(\xi)=0$.
\end{lemma}

Finally, we consider the $3$-primary part ${_3\Omega}_{13}^{\mm{O}\langle 8\rangle}(\xi)={\Omega}_{13}^{\mm{O}\langle 8\rangle}(\xi)/K^3$ where $K^3$ is a subgroup of all elements of finite order prime to $3$. 
\begin{lemma}\label{Sigma13}
(1) ${_3\Omega}_{13}^{\mm{O}\langle 8\rangle}(\xi)\cong \mb{Z}_3$ or $0$.

(2)	In case of ${_3\Omega}_{13}^{\mm{O}\langle 8\rangle}(\xi)\cong \mb{Z}_3$, its generator is represented by an exotic sphere $\Sigma^{13}$ (cf. Section \ref{intro}).
\end{lemma}
 \begin{proof}
 	By the AHSS, it is easy to get (1). Let $\iota:S^0\to \mm{M}\xi$ represent a generator of $\pi_0(\mm{M}\xi)\cong H_0(\mm{M}\xi)=\mb{Z}$. Then, we have the following morphism of spectral sequences 
 \[
\xymatrix@C=.9cm{
\bar{E}_2^{p,q}=H_p(S^0;\pi_q(\mm{MO}\langle 8\rangle))\ar@{=>}[r]\ar[d]&\pi_{p+q}(\mm{MO}\langle 8\rangle)\ar[d]^{\iota_\ast}\\
{E}_2^{p,q}=H_p(\mm{M}\xi;\pi_q(\mm{MO}\langle 8\rangle))\ar@{=>}[r]&\pi_{p+q}(\mm{MO}\langle 8\rangle\wedge \mm{M}\xi)
}
\]	
If ${E}_\infty^{0,13}={E}_2^{0,13}$, then a nontrivial element with order $3$ in  $\Omega_{13}^{\mm{O}\langle 8\rangle}(\xi)$ lies in the image of $\iota_\ast$. By \cite{KervaMilnor}, an exotic sphere $\Sigma^{13}$ is a generator of $\pi_{13}(\mm{MO}\langle 8\rangle)$. So, (2) follows.
 \end{proof} 

Now, we calculate ${_3\Omega}_{13}^{\mm{O}\langle 8\rangle}(\xi)$ clearly by the Adams spectral sequence. 
\begin{lemma}\label{p1mod3}
	If $p_1(\xi)\ne 0\mod 3$, ${_3\Omega}_{13}^{\mm{O}\langle 8\rangle}(\xi)=0$. 
\end{lemma}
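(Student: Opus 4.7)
My approach is to analyze the 3-primary Atiyah-Hirzebruch spectral sequence $E_2^{p,q} = H_p(\mm{M}\xi; \pi_q(\mm{MO}\langle 8\rangle)) \Rightarrow \pi_{p+q}(\mm{MB})$ and produce an explicit nontrivial differential killing the generator of $E_2^{0,13}$ at the prime $3$. Since $H_\ast(\mm{M}\xi) \cong H_\ast(\mm{CP}^4)$ via the Thom isomorphism, the group $H_p(\mm{M}\xi)$ is $\mb{Z}$ for $p \in \{0,2,4,6,8\}$ and $0$ otherwise. Combining this with the table of $\pi_\ast(\mm{MO}\langle 8\rangle)$ given earlier, the only 3-primary entries with $p+q=13$ are $E_2^{0,13} \cong \mb{Z}_3$ (from ${_3\pi}_{13}(\mm{MO}\langle 8\rangle)$) and $E_2^{4,10} \cong \mb{Z}_3$ (from $\mb{Z} \otimes {_3\pi}_{10}(\mm{MO}\langle 8\rangle)$), and for degree reasons these cannot interact with any other 3-primary term except possibly each other.

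The main step is then to show that the differential $d_4 : E_4^{4,10} \to E_4^{0,13}$ is an isomorphism. The 4-cell carrying $x^2 U$ of $\mm{M}\xi$ is stably attached to the bottom cell $U$ by a multiple of the Greek-letter element $\alpha_1 \in \pi_3(S^0)_{(3)}$, and this multiple is read off from the Steenrod operation $P^1$ acting on $U$. By the Wu formula at $p=3$, $P^1 U \equiv -p_1(\xi) U \equiv s x^2 U \pmod{3}$, so the attaching map equals $s\alpha_1$, which is nonzero under the hypothesis $p_1(\xi) \not\equiv 0 \pmod 3$. Consequently the AHSS differential $d_4$ acts on $E_4^{4,10} = H_4(\mm{M}\xi) \otimes {_3\pi}_{10}(\mm{MO}\langle 8\rangle)$ as multiplication by $s\alpha_1$.

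To finish, I would observe that the generator $u$ of ${_3\pi}_{10}(\mm{MO}\langle 8\rangle) \cong \mb{Z}_3$ is the Hurewicz image of $\beta_1 \in {_3\pi}_{10}(S^0)$ under the unit map, so $\alpha_1 u$ is the image of the nonzero class $\alpha_1 \beta_1$, which generates ${_3\pi}_{13}(S^0) = \mb{Z}_3$ and hence also surjects onto ${_3\pi}_{13}(\mm{MO}\langle 8\rangle) = \mb{Z}_3$. Thus $d_4$ is an isomorphism $\mb{Z}_3 \to \mb{Z}_3$, so $E_\infty^{0,13} = 0$ on the 3-primary part, giving ${_3\pi}_{13}(\mm{MB}) = 0$ as desired.

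The main obstacle is the middle step: carefully identifying the AHSS differential $d_4$ with the attaching map encoded by $P^1 U$, and thus with multiplication by $s\alpha_1$, using the standard dictionary $P^1 \leftrightarrow \alpha_1$ at the prime $3$. An alternative would be to run the 3-primary Adams spectral sequence in the spirit of Section~\ref{computation}'s $p=2$ calculation, but the AHSS approach is shorter here because of the sparsity of the relevant 3-primary entries.
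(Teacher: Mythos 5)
Your proof is correct in substance but takes a genuinely different route from the paper. The paper works inside the mod-$3$ Adams spectral sequence for $\mm{MB}$: it quotes from \cite{Aikawa,HoRa1995} that $\pi_{13}(\mm{MO}\langle 8\rangle)\cong\mb{Z}_3$ is detected by $b_0h_{1,0}\in Ext^{3,16}_{\mc A}$, uses (the proof of) Lemma~\ref{Sigma13}(2) to argue that a nontrivial class in ${_3\pi}_{13}(\mm{MB})$ would force $b_0h_{1,0}U$ to be nonzero in $Ext^{3,16}_{\mc A}(H^\ast(\mm{MB};\mb Z_3),\mb Z_3)$, and then shows that when $p_1(\xi)\not\equiv 0 \pmod 3$ the element $b_0h_{1,0}U$ is a coboundary in the cobar complex because $\mc P^1U=p_1(\xi)U$. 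You instead stay entirely in the (topological) Atiyah--Hirzebruch spectral sequence, identifying a killing differential $d_4\colon E_4^{4,10}\to E_4^{0,13}$ with multiplication by the stable attaching map of the $4$-cell to the bottom cell, which is $\pm s\,\alpha_1$ by the same Wu-formula calculation $\mc P^1U=p_1(\xi)U\equiv -sx^2U$. So the two arguments are powered by the identical input ($\mc P^1$ on the Thom class), but are packaged in different spectral sequences.

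Two remarks on the write-up. First, there is a small indexing slip: $E_2^{4,10}$ lives in total degree $p+q=14$, not $13$; you meant that the only $3$-primary entries in total degrees $13$ and $14$ are $E_2^{0,13}$ and $E_2^{4,10}$ (note also $E_2^{2,12}\cong\mb Z$ sits in total degree $14$, but it contributes no $3$-torsion differential into $E^{0,13}$ since the $2$-cell is attached trivially at $p=3$). Second, your final step implicitly uses that the unit map $S^0\to\mm{MO}\langle 8\rangle$ is surjective on ${_3\pi}_{10}$ and ${_3\pi}_{13}$, i.e.\ that $\beta_1$ and $\alpha_1\beta_1$ hit the respective generators; this is true and can be read off from the filtration comparison in \cite{HoRa1995}, but it deserves an explicit citation since it is exactly the same ``unit map'' input the paper invokes via the proof of Lemma~\ref{Sigma13}(2). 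With those points made explicit, your AHSS argument is a clean alternative, and arguably more self-contained for a reader who does not want to unwind the mod-$3$ cobar complex; the paper's Adams-theoretic proof has the advantage of matching the framework already set up for the $2$-primary computation in Section~\ref{computation}.
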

\begin{proof}
 From \cite{Aikawa} and \cite{HoRa1995}, $\pi_{13}(\mm{MO}\langle 8\rangle)=\mb{Z}_3$ is generated by $$b_0h_{1,0}\in Ext_{\mc{A}}^{3,16}(H^\ast(\mm{MO}\langle 8\rangle;\mb{Z}_3),\mb{Z}_3)$$
in the ASS, where $\mc{A}$ is the mod $3$ Steenrod algebra. By the proof of (2) of Lemma \ref{Sigma13}, $b_0h_{1,0}U$ is nontrivial in $Ext_{\mc{A}}^{3,16}(H^\ast(\mm{MB};\mb{Z}_3),\mb{Z}_3)$ if ${_3\pi}_{13}(\mm{MB})=\mb{Z}_3$, where $U$ is the stable Thom class of $\mm{M}\xi$.
Note that
\begin{itemize}
	\item[(1)] $\mathcal{P}^1U=p_1(\xi)U\mod 3$  where $\mathcal{P}^1$ is the reduced Steenrod power in the mod $3$ Steenrod algebra $\mc{A}$ \cite{MilnorStasheff}.
	\item[(2)] $h_{1,0}$ is the dual of $\mc{P}^1$.
\end{itemize}
 If $p_1(\xi)\ne 0\mod 3$, then $\delta(b_0p_1(\xi)U)=b_0h_{1,0}U$ in the cobar complex of  $Ext_{\mc{A}}^{\ast,\ast}(H^\ast(\mm{MB};\mb{Z}_3),\mb{Z}_3)$. In other words, $b_0h_{1,0}U$ is trivial. Hence, the desired result follows from  (1) of Lemma \ref{Sigma13}.
\end{proof}

\begin{remark}
	The key to classify all spin manifolds $\mc{M}$ with {\it restriction lifts} lies in determining the bordism invariant of ${_2}\Omega_{13}^{\mm{O}\langle 8\rangle}(\xi)=\mb{Z}_2$ (cf. (1) of Lemma \ref{2prim}). I think that it is very difficult. 
\end{remark}

\begin{remark}
	If one want to classify all manifolds $\mc{M}$, can also set up the corresponding bordism groups $\Omega_{13}^{\mm{O}\langle 8\rangle}(-s\mathcal H)$. In other words, we can construct the space $\mm{BO}\langle 8\rangle\times \mm{CP}^\infty$ and the bundle $\gamma_8\times -s\mathcal H$ over it where $\mathcal H$ is the canonical complex line bundle over $\mm{CP}^\infty$, $s$ is the multiple of the first Pontryagin class of $\mc{M}$. However, the bordism group ${_2}\Omega_{13}^{\mm{O}\langle 8\rangle}(-s\mathcal H)$ is more complicated than Lemma \ref{2prim}. It is also tough to detect the bordism invariants of the  summands of ${_2}\Omega_{13}^{\mm{O}\langle 8\rangle}(-s\mathcal H)$.  
\end{remark}

\section{The sphere bundles of $8$-dimensional vector bundles}\label{homosec}
 
 Let $\mm{G}_n$ be the monoid of all homotopy equivalences $S^{n-1}\to S^{n-1}$ topologized as the subspace of $\mm{Hom}(S^{n-1},S^{n-1})$ that contains all maps from $S^{n-1}$ to $S^{n-1}$. Then there exists a classifying space $\mm{BG}_n$ [\cite{Rudyak}, pp.232]. 
  Furthermore, there is a natural sequence 
$$\cdots\to\mm{BG}_{n-1}\to  \mm{BG}_n\stackrel{r_n^G}{\to } \mm{BG}_{n+1}\to \cdots$$
 where the homotopy fiber of $r_n^G : \mm{BG}_n \to  \mm{BG}_{n+1}$ is $(n - 2)$-connected [\cite{Rudyak}, Proposition 4.24]. Let $\mm{BG}$ be the telescope of this sequence. By taking the sphere bundle, we have a natural map $\alpha_{n}:\mm{BO}_n\to \mm{BG}_n$, which induces the map $\alpha:\mm{BO}\to \mm{BG}$. The classifying spaces $\mm{BO}$ and $\mm{BG}$ for the stable bundle theories are infinite loop spaces and the natural map between them is an infinite loop map by theorems of Boardman and Vogt \cite{BoaVog}. In fact, they are homotopy equivalent to the $0$th spaces in the $\Omega$-spectra of the multiplicative cohomology theories $KO$ and $KG$ respectively. Thus, there is a natural map between the $\Omega$-spectra $KO$ and $KG$, which is induced by $\alpha:\mm{BO}\to \mm{BG}$. In addition, the homotopy fiber $G/O$ of $\alpha:\mm{BO}\to \mm{BG}$ is also an infinite loop space. 

\begin{lemma}\label{KOtoKG}
The following items hold:
\item (1) $\widetilde{ KO}^0(\mm{CP}^3)\cong[\mm{CP}^3,\mm{BO}_8]$.
\item (2) $\widetilde{ KG}^0(\mm{CP}^3)\cong [\mm{CP}^3,\mm{BG}_8]$.

\item (3) $\widetilde{ KO}^0(\mm{CP}^3)\to\widetilde{  KG}^0(\mm{CP}^3)$ is an epimorphism. 
\end{lemma}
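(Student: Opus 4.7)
The plan is to handle the two parts separately.

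For part (1), I would run iterative obstruction theory along the towers $\mm{BO}_8\to \mm{BO}_9\to\cdots\to \mm{BO}$ and $\mm{BG}_8\to \mm{BG}_9\to\cdots\to \mm{BG}$. Each successive fibration $\mm{BO}_n\to \mm{BO}_{n+1}$ has fiber $S^n$, which is $(n-1)$-connected, and the corresponding fibration $\mm{BG}_n\to \mm{BG}_{n+1}$ has $(n-2)$-connected fiber by the fact recalled in the excerpt. Since $\dim \mm{CP}^3=6$, for every $n\ge 8$ the obstruction groups $H^{k+1}(\mm{CP}^3;\pi_k(\text{fiber}))$ and the uniqueness groups $H^k(\mm{CP}^3;\pi_k(\text{fiber}))$ are forced to vanish: either $k$ is small enough that $\pi_k(\text{fiber})=0$, or $k$ is large enough that $H^k(\mm{CP}^3)$ and $H^{k+1}(\mm{CP}^3)$ already vanish. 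Therefore $[\mm{CP}^3,\mm{BO}_8]\to [\mm{CP}^3,\mm{BO}]=\widetilde{KO}^0(\mm{CP}^3)$ and $[\mm{CP}^3,\mm{BG}_8]\to [\mm{CP}^3,\mm{BG}]=\widetilde{KG}^0(\mm{CP}^3)$ are both bijections, giving (1).

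For part (2), I would use the fibration of infinite loop spaces
\[
G/O\longrightarrow \mm{BO}\xrightarrow{\ \alpha\ } \mm{BG},
\]
which deloops to a cofiber sequence of spectra $g/o\to ko\to kg$ and hence yields the long exact sequence
\[
\widetilde{g/o}^0(\mm{CP}^3)\to \widetilde{KO}^0(\mm{CP}^3)\to \widetilde{KG}^0(\mm{CP}^3)\to \widetilde{g/o}^1(\mm{CP}^3).
\]
It therefore suffices to show $\widetilde{g/o}^1(\mm{CP}^3)=0$, for which I would use the Atiyah--Hirzebruch spectral sequence
\[
E_2^{p,q}=\widetilde{H}^p(\mm{CP}^3;\pi_{-q}(G/O))\Longrightarrow \widetilde{g/o}^{p+q}(\mm{CP}^3).
\]
Because $\widetilde{H}^\ast(\mm{CP}^3)$ is concentrated in degrees $2,4,6$, the diagonal $p+q=1$ only detects $\pi_1(G/O)$, $\pi_3(G/O)$, and $\pi_5(G/O)$. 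A short computation via the homotopy long exact sequence of $G/O\to \mm{BO}\to \mm{BG}$, using that the $J$-homomorphism is an isomorphism on $\pi_1$ and a surjection $\mb{Z}\to \mb{Z}_{24}$ on $\pi_3$ (Adams), together with $\pi_5^s=0$, forces each of these three homotopy groups to vanish. Hence the whole $p+q=1$ diagonal is zero, $\widetilde{g/o}^1(\mm{CP}^3)=0$, and the desired epimorphism follows.

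The main obstacle is really one of bookkeeping: one must verify that the infinite loop structures on $G/O$, $\mm{BO}$, $\mm{BG}$ do assemble into a genuine cofiber sequence of spectra yielding the advertised long exact sequence in generalized cohomology on $\mm{CP}^3$, and that the homotopy long exact sequence for $G/O$ is matched with the Adams image-of-$J$ computations in exactly the right degrees. Once that is in place, both parts reduce to the simple numerical fact that $\dim \mm{CP}^3 = 6$ is too small to support any of the obstructions at stake.
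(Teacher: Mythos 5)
Your proof is correct, and it uses the same two key ingredients as the paper. For part~(1), the paper cites Rudyak for the fact that the homotopy fiber of $r_8^V:\mm{BV}_8\to\mm{BV}$ ($V=O,G$) is $6$-connected, which directly gives a bijection on $[\mm{CP}^3,-]$ since $\dim\mm{CP}^3=6$; your tower-by-tower obstruction argument unwinds exactly this observation. For part~(2), the paper does not pass to spectra at all: it simply applies obstruction theory to lift maps $\mm{CP}^3\to\mm{BG}$ through the fibration $G/O\to\mm{BO}\to\mm{BG}$, citing Sullivan for $\pi_1(G/O)=\pi_3(G/O)=\pi_5(G/O)=0$, and observes that the obstruction groups $H^{k+1}(\mm{CP}^3;\pi_k(G/O))$ therefore all vanish. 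Your version repackages this at the spectrum level via the cofiber sequence $g/o\to ko\to kg$, the resulting long exact sequence, and the Atiyah--Hirzebruch spectral sequence for $\widetilde{g/o}^1(\mm{CP}^3)$, and rederives the needed vanishings of $\pi_k(G/O)$ from the image-of-$J$ computations rather than citing them. The two routes are equivalent --- the obstruction groups in the paper's argument are precisely the $E_2$-entries on the $p+q=1$ diagonal of your AHSS --- but the paper's formulation is more economical, since it avoids having to verify that the infinite loop structure deloops $G/O\to\mm{BO}\to\mm{BG}$ into a genuine cofiber sequence of spectra (the ``bookkeeping'' point you flag). Both ultimately rest on the same facts: $\pi_k(G/O)=0$ for $k=1,3,5$ and $\dim\mm{CP}^3=6$.
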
 
\begin{proof}
	 Let $V=O$, $G$. The homotopy fiber of
 $r_{8\ast}^V : \mm{BV}_8 \to \mm{BV}$ is $6$-connected at least. Then, it is easy to get (1) and (2). 
 
 It is well known that $\pi_1(G/O)=\pi_3(G/O)=\pi_5(G/O)=0$ \cite{Sull}. By the obstruction theory, any map from $\mm{CP}^3$ to $\mm{BG}$ can be lifted to $\mm{BO}$. Thus, (3) follows.   
 \end{proof}

\begin{lemma}\label{firstPonofspherebundle}
	The stable class of a real vector bundle over $\mm{CP}^3$ is determined by its first Pontrjagin class. Moreover, for every integer $n$, there exists a real vector bundle $\eta$ over $\mm{CP}^3$ so that $p_1(\eta)=nx^2$ where $x\in H^2(\mm{CP}^3)$ is a generator.  
\end{lemma}
\begin{proof}
	By Sanderson [\cite{Sand}, Theorem 3.9], $\widetilde{ KO}^0(\mm{CP}^3)\cong\mb{Z}$ is generated by the stable class of the underlying real vector bundle, namely $r(h_c)$ of the canonical complex line bundle $h_c$ over $\mm{CP}^3$. Since that the total Chern class of $h_c$ is $c(h_c)=1+x$, the total Pontrjagin class of the stable class of $r(h_c)$ is $p=1+x^2$ where $x\in H^2(\mm{CP}^3)$ is a generator. By the formula for the total Pontrjagin class of the Whitney sum of real vector bundles, we finish this proof.  
\end{proof}

Finally, we consider when the sphere bundles of two $8$-dimensional vector bundles over $\mm{CP}^3$ are (strongly) fiber homotopy equivalent (see [\cite{Sw}, Definition 15.10]).


\begin{lemma}\label{homotopy}
Let $\eta$, $\eta^\prime$ be arbitrary $8$-dimensional real vector bundles over $\mm{CP}^3$. The sphere bundle of $\eta$ is fiber homotopy equivalent to the sphere bundle of $\eta^\prime$ if $p_1(\eta)=p_1(\eta^\prime)\mod 24$.
\end{lemma}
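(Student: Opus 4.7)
By Lemma \ref{KOtoKG}, the sphere bundle construction yields a surjection $\widetilde{KO}^0(\mm{CP}^3) \twoheadrightarrow \widetilde{KG}^0(\mm{CP}^3)$, and two rank-$8$ real vector bundles over $\mm{CP}^3$ have fiber homotopy equivalent $S^7$-bundles precisely when they represent the same class in $\widetilde{KG}^0(\mm{CP}^3)$. Combined with Sanderson's identification $\widetilde{KO}^0(\mm{CP}^3) \cong \mb{Z}$ via the first Pontrjagin class, the lemma reduces to showing that the kernel of the $J$-homomorphism
\[
J_\ast\colon \widetilde{KO}^0(\mm{CP}^3) \longrightarrow \widetilde{KG}^0(\mm{CP}^3)
\]
contains every class whose $p_1$ is divisible by $24$. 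I would fix the generator $\mu = [r\mc{H}]-2 \in \widetilde{KO}^0(\mm{CP}^3)$ (for which a direct computation via $c(\mc{H})c(\bar{\mc{H}}) = 1-x^2$ gives $p_1(\mu) = x^2$), so that the hypothesis rewrites as $[\eta]-[\eta'] \in 24\mb{Z}\cdot\mu$ and the task becomes $J_\ast(24\mu)=0$.

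The key external input is Adams's theorem that the stable $J$-homomorphism $\pi_3(\mm{O}) = \mb{Z}\to\pi_3^s=\mb{Z}_{24}$ is surjective. To transfer this to $\mm{CP}^3$ I would combine two ingredients. First, naturality of $J$ with respect to the collapse map $\pi\colon\mm{CP}^3\to\mm{CP}^3/\mm{CP}^1\to S^4$ (obtained by quotienting the $2$-skeleton and then the $6$-cell, an isomorphism on $H^4$) carries the order-$24$ information from $S^4$ into $\widetilde{KG}^0(\mm{CP}^3)$. Second, I would analyze $\widetilde{KG}^0(\mm{CP}^3)$ via its Atiyah--Hirzebruch spectral sequence, whose only $\mb{Z}$-torsion filtration quotient in the relevant range is $H^4(\mm{CP}^3;\pi_3^s) \cong \mb{Z}_{24}$, and exploit the Adams operations $\psi^k(\mu) = k^2\mu$ on $\widetilde{KO}^0(\mm{CP}^3)$ (which follow from $\psi^k(r\mc{H})=r(\mc{H}^k)$ and the identity $r\mc{H}^k - 2 = k^2\mu$ forced by $p_1(r\mc{H}^k) = k^2 x^2$) together with Adams's characterization of the $J$-group via $\psi^k$ to pin down the order of $J_\ast(\mu)$ in $\widetilde{KG}^0(\mm{CP}^3)$ as exactly $24$.

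The principal obstacle is a factor-of-two normalization mismatch. Pulling back from $S^4$ alone only yields $\ker J_\ast \supseteq 48\mb{Z}\cdot\mu$, because the generator of $\widetilde{KO}^0(S^4) \cong \mb{Z}$ (the reduced quaternionic Hopf bundle) has $p_1 = 2$ whereas $\mu$ has $p_1=1$. Closing this gap to $24\mb{Z}\cdot\mu$ is where the main work lies, and it can be handled either through the Adams $\psi^k$-operations together with the Adams conjecture (forcing the extension in the AHSS for $\widetilde{KG}^0(\mm{CP}^3)$ to split in the right direction so that the $\mb{Z}_{24}$ filtration quotient is a genuine summand of $\mathrm{Im}\, J_\ast$), or by explicitly constructing a rank-$8$ bundle over $\mm{CP}^3$ with $p_1 = 24$ whose sphere bundle is fiber-homotopy trivial -- for instance by clutching over the $4$-skeleton $\mm{CP}^2$ and verifying that the obstruction to fiber-homotopy triviality of the extension across the top cell vanishes since $\pi_5(\mm{G}/\mm{O}) = 0$. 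Once $J_\ast(24\mu) = 0$ is confirmed, the conclusion of the lemma is immediate.
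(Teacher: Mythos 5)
Your reduction is sound up to a point: you correctly pass to stable classes via Lemma \ref{KOtoKG}, identify $\mu = [r\mc{H}]-2$ as the generator of $\widetilde{KO}^0(\mm{CP}^3)\cong\mb{Z}$ with $p_1(\mu)=x^2$, and recast the lemma as $J_\ast(24\mu)=0$. But you then \emph{name} the decisive difficulty --- that pulling $\ker J$ back from $S^4$ only yields $48\mb{Z}\cdot\mu\subseteq\ker J_\ast$ because the generator of $\widetilde{KO}^0(S^4)$ has $p_1=2$ --- and stop short of resolving it. Saying ``this is where the main work lies'' and gesturing at two possible mechanisms (Adams operations plus the Adams conjecture, or an explicit clutching construction) is not a proof; the factor-of-two gap is precisely the content of the lemma and it remains open in your write-up. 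Your second suggested route, as sketched, also only relocates the difficulty: to know that a rank-$8$ bundle over $\mm{CP}^2$ with $p_1=24$ has fiber-homotopy-trivial sphere bundle before extending across the $6$-cell via $\pi_5(\mm{G}/\mm{O})=0$, you must already know that the order of $J(\mu|_{\mm{CP}^2})$ divides $24$, which is the same normalization problem one dimension down.

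The paper avoids this normalization trap entirely. It maps the $[\,\cdot\,,\mm{BO}]$ Puppe exact sequence for $S^2\to\mm{CP}^2\to S^4$ to the corresponding $[\,\cdot\,,\mm{BG}]$ sequence and chases the ladder: $\alpha_2$ is an isomorphism, $\alpha_4$ is an epimorphism (Adams), $[S^3,\mm{BO}]=0$, and the $\eta$-multiplication $\rho_3\colon\pi_2^s\to\pi_3^s$ is injective (cited to \cite{WangX}); this gives surjectivity of $[\mm{CP}^2,\mm{BO}]\to[\mm{CP}^2,\mm{BG}]$ directly, with no reference to the $p_1$-value of the generator of $[S^4,\mm{BO}]$. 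A second cofibration $\mm{CP}^2\to\mm{CP}^3\to S^6$ (using $\pi_6(\mm{BG})\cong\pi_5^s=0$) then promotes this to surjectivity of $[\mm{CP}^3,\mm{BO}]\cong\mb{Z}\to[\mm{CP}^3,\mm{BG}]$, and since the target has order dividing $24$ the kernel contains $24\mb{Z}\cdot\mu$, which is exactly what the lemma asserts. To salvage your approach you would need to actually carry out one of your two closing steps; the ladder-of-Puppe-sequences argument in the paper is both shorter and avoids the $S^4$-generator normalization issue you correctly flagged but did not overcome.
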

\begin{proof} 
For the cofibration sequence [\cite{Hatcher}, pp.398] 
$S^2\stackrel{i}{\to} \mm{CP}^2\to S^4 \to\Sigma S^2$ 
we have the following exact sequences 
\[
\xymatrix@C=.9cm{
0\ar[d]\ar[r]&[S^4,\mm{BO}]\ar[d]^{\alpha_4}\ar[r]&[\mm{CP}^2,\mm{BO}]\ar[d]\ar[r]&[S^2,\mm{BO}]\ar[r]\ar[d]^{\alpha_2}&0\ar[d]\\
[\Sigma S^2,\mm{BG}]\ar[r]^{\rho_3}&[S^4,\mm{BG}]\ar[r]&[\mm{CP}^2,\mm{BG}]\ar[r]&[S^2,\mm{BG}]\ar[r]^{\rho_2}&\cdots
}
\]
Note that $\alpha_2$ is an isomorphism, $\alpha_4:\mb{Z}\to \mb{Z}_{24}$ is an epimorphism \cite{Sull}, and $[\mm{CP}^2,\mm{BO}]=\mb{Z}$ [\cite{Sand}, Theorem 3.9]. By \cite{WangX}, $\rho_3:\mb{Z}_2\to \mb{Z}_{24}$ is a monomorphism. Therefore, $[\mm{CP}^2,\mm{BO}]=\mb{Z}\to [\mm{CP}^2,\mm{BG}]=\mb{Z}_{24}$ is an epimorphism. By the exact sequences for the cofibration sequence $\mm{CP}^2\to \mm{CP}^3\to S^6\to \Sigma\mm{CP}^2$, we have that $[\mm{CP}^3,\mm{BO}]=\mb{Z}\to [\mm{CP}^3,\mm{BG}]=\mb{Z}_{24}$ is an epimorphism. So, the desired result follows.  
\end{proof}

\section{Proofs of Theorems \ref{1.1} and \ref{1.2}}\label{proofmain}

Proofs of (1), (2) and (3) of Theorem \ref{1.1}:
	For any nonspin manifold $\mc{M}$ with a {\it restriction lift}, the corresponding bordism group $\Omega_{13}^{\mm{O}\langle 8\rangle}(\xi)$ as in Section \ref{normal5} satisfies the case $2$ or $4$ as in Lemma \ref{2prim}. 
All manifolds $\mc{M}^\prime$ with $p_1(\mc{M}')=p_1(\mc{M})$ lie in the same bordism group. 

By Lemma \ref{Sigma13} and \ref{hcobordism}, $\mc{M}$ is diffeomorphic to $\mc{M}^\prime\#\Sigma^{13}$ for some homotopy sphere $\Sigma^{13}$. Especially, by Lemma \ref{p1mod3}, $\mc{M}$ is diffeomorphic to $\mc{M}^\prime$ if $p_1(\mc{M})\ne 0\mod 3$.

Recall the definition of $\#$.
By Poincar\'e Theorem, $\Sigma^{13}$ is homeomorphic to $S^{13}$. Thus $\mc{M}^\prime\#\Sigma^{13}$ is homeomorphic to 
$\mc{M}^\prime\# S^{13}$. Since $\mc{M}^\prime\# S^{13}= \mc{M}'$, $\mc{M}$ is homeomorphic to $\mc{M}^\prime$. $\Box$ 

Proof of Corollary \ref{1.2}:
By (1) of Lemma \ref{KOtoKG} and Lemma \ref{firstPonofspherebundle}, we have that for every $k\in \mb{Z}$, there exists an $8$-dimensional vector bundle $\eta$ over $\mm{CP}^3$ so that $p_1(\eta)=kx^2$.
For the sphere bundle $S^n\to \overline{\mc{M}}\stackrel{q}{\to} \mm{CP}^3$ of a vector bundle $\eta$, it is well-known that $\tau(\overline{\mc{M}})\oplus \epsilon^1\cong q^\ast(\tau(\mm{CP}^3)\oplus \eta)$ where $\tau$ denotes the tangent bundle, $\epsilon^1$ is the $1$-dimensional trivial bundle over $\overline{\mc{M}}$. Hence $p_1(\overline{\mc{M}})=(4+k)x^2$. By (1) and (2) of Theorem \ref{1.1}, Corollary \ref{1.2} follows. $\Box$

Proof of (4) of Theorem \ref{1.1}:
Note that $\mc{M}$ is homeomorphic to $\mc{M}\# \Sigma^{13}$. 
 By Lemma \ref{homotopy} and Corollary \ref{1.2}, $\mc{M}$ is homotopy equivalent to $\mc{M}^\prime$ if $p_1(\mc{M})=p_1(\mc{M}^\prime)\mod 24$. Furthermore, $p_1$ mod $24$ is a homotopy invariant \cite{Hi}. $\Box$

\section{A metric of non-negative sectional curvature}\label{metricSection}

In Riemannian geometry, there is an interesting question:
\begin{question}
	Let $E$ be the total space of a vector bundle over a compact manifold $B$ with non-negative sectional curvature. Does E admit a metric of non-negative curvature?
\end{question} 

From \cite{GAD}, we know that: for arbitrary real (resp. complex) vector bundle $\eta$ over $\mm{CP}^3$, there exists $k\ge 0$ such that the total space of $\eta\oplus \epsilon^k$ admits a metric of non-negative curvature where $\epsilon^k$ is the $k$-dimensional trivial bundle. In this section, we will prove that the total spaces of certain real (resp. complex) vector bundles over $\mm{CP}^3$ admit metrics of non-negative curvature.

 Recall that $\mm{CP}^3=\mm{U}(4)/(\mm{U}(1)\times \mm{U}(3))$. Now we consider the following bundle, namely $\eta_{\bold q}$ ($\bold q=(q_1,q_2,q_3,q_4)$, $q_i\in \mb{Z}$)
 $$\eta_{\bold q}:\mb{C}^4\to \mm{U}(4)\times_{(\mm{U}(1)\times \mm{U}(3))}\mb{C}^4\to \mm{CP}^3$$
 where the $\mm{U}(1)\times \mm{U}(3)$ action on $\mb{C}^4$ is given by, for $(z,A)\in \mm{U}(1)\times \mm{U}(3) $, 
 $$(z,A)\curvearrowright (c_1,c_2,c_3,c_4):=(z^{q_1}c_1,z^{q_2}c_2,z^{q_3}c_3,z^{q_4}c_4)$$
We can endow $\mm{U}(4) \times \mb{C}^4$ with the product metric of $\langle , \rangle_{\mm{U}(4)}$ and the standard Euclidean metric. By O'Neill's Theorem on Riemannian submersions, $\mm{U}(4)\times_{(\mm{U}(1)\times \mm{U}(3))}\mb{C}^4$ admits a metric of non-negative curvature. On the other hand, by the $\mm{U}(1)\times \mm{U}(3)$ action on $\mb{C}^4$, we have $\mm{U}(4)\times_{(\mm{U}(1)\times \mm{U}(3))}\mb{C}^4=S^7\times_{S^1}\mb{C}^4$ where $\mm{U}(1)=S^1$. Moreover, the bundle $\eta_{\bold q}$ has the form
$$\mb{C}^4\to S^7\times_{S^1}\mb{C}^4\to \mm{CP}^3.$$ 
\begin{lemma}\label{metricon8vb}
	For every integer $n\ge 0$, there exists $\bold q=(q_1,q_2,q_3,q_4)$ such that the first Pontrjagin class of the underlying real bundle of $\eta_{\bold q}$ equals to $nx^2$ where $x\in H^2(\mm{CP}^3)$ is a generator.
\end{lemma} 
\begin{proof}
We first present a bundle, namely $\theta_{\bold q}$ over $\mm{CP}^\infty$
$$\mb{C}^4\to S^\infty\times_{S^1}\mb{C}^4\to \mm{CP}^\infty$$
where the $S^1$ action on $\mb{C}^4$ is given by, for $z\in S^1$, $$z\curvearrowright (c_1,c_2,c_3,c_4):=(z^{q_1}c_1,z^{q_2}c_2,z^{q_3}c_3,z^{q_4}c_4).$$   
By the natural inclusion $i:\mm{CP}^3\to \mm{CP}^\infty$, we have $\eta_{\bold q}=i^\ast\theta_{\bold q}$.

Note that $\theta_{\bold q}$ is the associated bundle of a principal $\mm{U}(4)$-bundle, namely $\Theta_{\bold q}:\mm{U}(4)\to S^\infty\times_{S^1} \mm{U}(4)\to \mm{CP}^\infty$, i.e. there is a bundle isomorphism so that $(S^\infty\times_{S^1} \mm{U}(4))\times_{\mm{U}(4)}\mb{C}^4\cong S^\infty\times_{S^1} \mb{C}^4$ where the $\mm{U}(4)$ action on $\mb{C}^4$ is standard, the $S^1$ action on $\mm{U}(4)$ is induced by the embedding $S^1\to \mm{U}(4):z\to \mm{diag}(z^{q_1},z^{q_2},z^{q_3},z^{q_4})$. 

Let $\kappa_{\bold q}:\mm{CP}^\infty \to \mm{BU}(4)$ be the classifying map of $\Theta_{\bold q}$. By the following homotopy fibration,
$$S^1\to \mm{U}(4)\to \mm{U}(4)/S^1\simeq S^\infty\times_{S^1} \mm{U}(4)  \to \mm{B}S^1=\mm{CP}^\infty \stackrel{\kappa_{\bold q}}{\longrightarrow }\mm{BU}(4)$$
${\kappa}_\bold{q}$ is induced by the above embedding $S^1\to  \mm{U}(4).$ 

Now we recall the classical characteristic theory for the sake of convenience.
Let $G$ be a Lie group, and $T^n \subset G$ be a maximal torus in $G$ with induced map $\kappa: \mm{BT}^n\to \mm{BG}$. Let $I_G$ be the ring of polynomials in $H^\ast(\mm{BT}^n)$ invariant under the Weyl group $W(G)$.
\begin{theorem}
[Borel's Theorem \cite{Borel1953}] If $H^\ast(G)$ and $H^\ast(G/T^n)$ are torsion-free, then $\kappa^\ast : H^\ast(\mm{BG}) \to H^\ast(\mm{BT}^n)$ is a monomorphism with range $I_G$.
\end{theorem}

As was shown in \cite{Borel1953}, the conditions of Borel's Theorem are satisfied for all classical groups.
Recall that $H^\ast(\mm{BU}(4))\cong \Bbb Z[c_1, \cdots, c_4]$ a polynomial rings on the Chern classses $c_1, \cdots, c_4$. Applying Borel's Theorem to $\mm{U}(4)$, we have that
the homomorphism
$H^\ast(\mm{BU}(4))\to H^\ast(\mm{BT}^4)$
sends  $c_i$ to the elementary symmetric polynomial $\sigma_i(x_1,\cdots,x_4)$ of degree $i$ where $x_j\in H^2(\mm{BT}^4)$, $1\le j\le 4$, are  the generators.
Given a circle subgroup $S^1$ in the maximal torus parameterized $\bold q$ as above, $\kappa_{\bold q}^*(c_i)=\sigma_i(q_1,\cdots,q_4)x^i$ where $x\in H^2(\mm{B}{S}^1)\cong \Bbb Z$ is a  generator. 

Therefore, the first and second Chern classes of the complex vector bundle $\eta_{\bold q}$ are $c_1(\eta_{\bold q})=\sigma_1(q_1,\cdots,q_4)x$, $c_2(\eta_{\bold q})=\sigma_2(q_1,\cdots,q_4)x^2$. Thus, the first Pontryagin class of the underlying real bundle of $\eta_{\bold q}$ equals to $p_1=(\sigma_1(q_1,\cdots,q_4))^2-2\sigma_2(q_1,\cdots,q_4)=\Sigma_{i=1}^4q_i^2$.

Recall Lagrange's four-square theorem: every integer $n\ge 0$, there exist $a,b,c,d\in \mb{Z}$ so that $n=a^2+b^2+c^2+d^2$, which has been proved by Lagrange and Euler respectively. This finishes our proof.
\end{proof}
 
By (1) of Lemma \ref{KOtoKG}, Lemma \ref{firstPonofspherebundle} and Lemma \ref{metricon8vb}, we have
\begin{proposition}\label{metric}
	For arbitrary $8$-dimensional real vector bundle $\eta$ over $\mm{CP}^3$ with $p_1(\eta)=nx^2$, $n\ge 0$, its total space admits a metric of non-negative sectional curvature. Moreover, the total space of the sphere bundle of $\eta$ admits a metric of non-negative sectional curvature.
\end{proposition}

Proof of Theorem \ref{1.3}: 
Assume that $\mc{M}$ is nonspin, and admits a {\it restriction lift}, $p_1(\mc{M})=(n+4)x^2$ where $n\ge 0$. By (1) of Lemma \ref{KOtoKG} and Lemma \ref{firstPonofspherebundle}, there exists an $8$-dimensional vector bundle $\eta$ over $\mm{CP}^3$ with $p_1(\eta)=nx^2$ where $n\ge 0$. 
 Let $\overline{\mc{M}}$ be the total space of its sphere bundle. Then $p_1(\overline{\mc{M}})=(n+4)x^2$. By (1), (2) of Theorem \ref{1.1} and Proposition \ref{metric}, we finish the proof. $\Box$

\end{document}